\theoremstyle{plain}
\newtheorem{theorem}{Theorem}
\newtheorem{lemma}{Lemma}
\newtheorem{proposition}{Proposition}
\newtheorem{assumption}{Assumption}
\theoremstyle{remark}
\newtheorem{remark}{Remark}
\newtheorem{example}{Example}
\def\t{^\top}
\def\cG{\mathcal{G}}
\def\cM{\mathcal{M}}
\def\cX{\mathcal{X}}
\def\cY{\mathcal{Y}}
\def\cU{\mathcal{U}}
\def\real{\mathcal{R}}
\def\vec{\mathrm{vec}}
\def\supp{\mathrm{supp}}
\def\ind{\mathbb{I}}
\def\pr{P}
\def\id{\iota}
\DeclareMathOperator*{\argmin}{arg\,min}
\begin{document}

\begin{frontmatter}
\title{A Geometric Perspective on Bayesian and Generalized Fiducial Inference}
\runtitle{Geometry of Inference}

\begin{aug}
  \author[A]{\fnms{Yang}~\snm{Liu}\ead[label=e1]{yliu87@umd.edu}\orcid{0000-0003-3559-688X}},
  \author[B]{\fnms{Jan}~\snm{Hannig}\ead[label=e2]{jan.hannig@unc.edu}\orcid{0000-0002-4164-0173}}
\and
\author[C]{\fnms{Alexander C.}~\snm{Murph}\ead[label=e3]{murph@lanl.gov}\orcid{0000-0001-7170-867X}}
\address[A]{Yang Liu is an Associate Professor from the Department of Human Development and Quantitative Methodology at University of Maryland, College Park, MD, USA\printead[presep={\ }]{e1}.}
\address[B]{Jan Hannig is a Professor from the Department of Statistics and Operations Research at the University of North Carolina, Chapel Hill, NC, USA\printead[presep={\ }]{e2}.}
\address[C]{Alexander C. Murph is a postdoctoral researcher from the Computer, Computational, and Statistical Sciences Division (CCS-6) at the Los Alamos National Laboratory, Los Alamos, NM, USA\printead[presep={\ }]{e3}.}
\end{aug}

\begin{abstract}
Post-data statistical inference concerns making probability statements about model parameters conditional on observed data. When {\it a priori} knowledge about parameters is available, post-data inference can be conveniently made from Bayesian posteriors. In the absence of prior information, we may still rely on objective Bayes or generalized fiducial inference (GFI). Inspired by approximate Bayesian computation, we propose a novel characterization of post-data inference with the aid of differential geometry. Under suitable smoothness conditions, we establish that Bayesian posteriors and generalized fiducial distributions (GFDs) can be respectively characterized by absolutely continuous distributions supported on the same differentiable manifold: The manifold is uniquely determined by the observed data and the data generating equation of the fitted model. Our geometric analysis not only sheds light on the connection and distinction between Bayesian inference and GFI, but also allows us to sample from posteriors and GFDs using manifold Markov chain Monte Carlo algorithms. A repeated measures analysis of variance example is presented to illustrate the sampling procedure.
\end{abstract}

\begin{keyword}
\kwd{approximate Bayesian computation}
\kwd{Bayesian inference}
\kwd{differentiable manifold}
\kwd{generalized fiducial inference}
\kwd{Markov chain Monte Carlo}
\end{keyword}

\end{frontmatter}

\section{Introduction}
\label{s:intro}

A post-data probability represents the degree of belief or plausibility that a certain assertion about model parameters is true given the observed data, which differs from a classical frequentist (i.e., pre-data) probability that is attached to the generative process of the observed data \citep{Dempster1964, MartinLiu2015b}. Post-data statistical inferences are most commonly made from a Bayesian posterior that is jointly determined by the prior distribution of model parameters and the likelihood function of the model \citep[e.g.,][Section 1.3]{GelmanEtAl2013}. When little \textit{a priori} information about parameters can be garnered, we may still resort to default or weakly informative priors to make Bayesian inference \citep{KassWasserman1996, Berger2006, Berger2015}.

Alternatively, we can avoid prior specification altogether and obtain a post-data probability distribution of parameters by inverting the data generating process. This idea originated from Fisher's fiducial argument \citep{Fisher1925, Fisher1930, Fisher1933, Fisher1935} and motivated the development of Dempster-Shafer theory \citep{Dempster1966, Dempster1968, Dempster2008}, inferential models \citep{MartinLiu2013, MartinLiu2015a, MartinLiu2015b, MartinLiu2015c}, generalized fiducial inference \citep[GFI;][]{CisewskiHannig2012, Hannig2009, Hannig2013, HannigEtAl2016, LaiEtAl2015, LiuHannig2016, LiuHannig2017, MurphEtAl2021, ShiEtAl2021} and so forth. Among all the descendents of Fisher's fiducial inference, only GFI is considered in the present paper; the associated post-data distribution of parameters is referred to as the \textit{generalized fiducial distribution (GFD)}.

A statistical model specifies how data are generated through a \textit{data generating equation (DGE)}, which is a function of parameters and random components with completely known distributions (e.g., uniform or standard Gaussian variates).\footnote{A data generating equation may be referred to as a data generating algorithm \citep[DGA;][]{MurphEtAl2021} when the generative process rather than the formal mathematical expression is of interest.} The DGE plays a key role in approximating post-data inference by simulation \citep{CranmerEtAl2020}. When a proper prior can be specified, we may simulate parameters and random components independently, obtain imputed data through the DGE, and retain the samples if and only if the imputed and observed data are sufficiently close. Such an accept-reject scheme is often referred to as approximate Bayesian computation \citep[ABC; e.g.,][]{Beaumont2019, BeaumontEtAl2002, BeaumontEtAl2009, Blum2010, FearnheadEtAl2012, MarinEtAl2012, SissonFan2011, SissonEtAl2018}: The retained samples of parameters approximately follow the posterior distribution and hence can be utilized to estimate posterior expectations. If no prior distribution is available, we can still sample random components but not parameters. To circumvent the latter, GFI proceeds to pair each realization of random components with the optimal parameter values such that the resulting imputed data is as close to the observed data as possible in some sense. Indeed such a best matching to the observed data may still not be good enough: Those values are deemed incompatible with the observed data and therefore have to be discarded, leading to a rejection step similar to ABC. It turns out that the resulting marginal samples of parameters approximately follow the GFD \citep{HannigEtAl2016}. 



It is then natural to ponder what the limits of the truncated distributions are when we request the imputed data to be infinitesimally close to the observed data in approximate post-data inference. As the main result of the present work, we completely characterize the weak limit for both approximate Bayesian inference and GFI when the truncation set contracts to a twice continuously differentiable submanifold of the joint space of parameters and random components. We are able to express the absolutely continuous densities of the limiting distributions with respect to the intrinsic measure of the submanifold, and show that Bayesian posteriors and GFDs in the usual sense are the corresponding marginals on the parameter space (Propositions \ref{prop:bayes} and \ref{prop:gfi}). As a contribution to the literature of GFI, we derive an explicit formula for the fiducial density in Proposition \ref{prop:gfi} that is more general compared to Theorem 1 of \citet{HannigEtAl2016}. Meanwhile, our work should be distinguished from \citet{MurphEtAl2022}, which also studied the geometry of GFI but focused on the case when the parameter space itself is a manifold. On the theoretical side, our geometric formulation applies to a broad class of parametric statistical models for continuous data and facilitates insightful comparisons between Bayesian inference and GFI. On the practical side, the geometric characterization suggests an alternative sampling scheme for approximate post-data inference: We apply manifold Markov chain Monte Carlo (MCMC) algorithms \citep[e.g.,][]{BrubakerEtAl2012, ZappaEtAl2018} to sample from the limiting distributions on the data generating manifold and only retain the parameter marginals. For certain problems (e.g., GFI for mixed-effects models), manifold MCMC sampling may scale up better than existing computational procedures.

The rest of the paper is organized as follows. We revisit in Section \ref{s:abc2gfi} the formal definitions of ABC and GFI; a graphical illustration is provided using a Gaussian location example. In Section \ref{s:geom}, we first present a general result (Theorem \ref{thm:gen}): When an ambient distribution is truncated to a sequence of increasingly finer approximations to a smooth manifold, the weak limit is absolutely continuous with respect to the manifold's intrinsic measure. We then apply the general result to derive representations for Bayesian posteriors and GFDs (Propositions \ref{prop:bayes} and \ref{prop:gfi}) and comment on their discrepancies. We review in Section \ref{s:mcmc} an MCMC algorithm that (approximately) samples from distributions on differentiable manifolds. A repeated measures analysis of variance (ANOVA) example is then presented to illustrate the sampling procedure (Section \ref{s:ex}). Limitations and possible extensions of the proposed method are discussed at the end (Section \ref{s:end}).

\section{Approximate Inference by Simulation}
\label{s:abc2gfi}

\subsection{Data Generating Equation}
\label{ss:dge}
Let $\cY$, $\Upsilon$, and $\Theta$ denote the spaces of data, random components, and parameters associated with a fixed family of parametric models: In particular, $\cY\subseteq\real^n$, $\Upsilon\subseteq\real^m$, and $\Theta\subseteq\real^q$, where $n$, $m$, and $q$ are positive integers. Following \citet{HannigEtAl2016}, we characterize the model of interest by its DGE
\begin{equation}
  Y = G(U, \theta),
  \label{eq:dge}
\end{equation}
in which the random components $U\in\Upsilon$ follow a completely known distribution (typically uniform or standard Gaussian), $\theta\in\Theta$ denotes the parameters, and $Y\in\cY$ denotes the random data. (\ref{eq:dge}) can be conceived as a formalization of the data generating code: Given true parameters $\theta$ and an instance of random components $U = u$, a unique set of data $Y = y$ can be imputed by evaluating the DGE, i.e., $y = G(u, \theta)$.

Now suppose that we have observed $Y = y$. Post-data inference aims to assign probabilities to assertions about parameters $\theta$ conditional on the observed data $y$ \citep{MartinLiu2015c}. In the conventional Bayesian framework, we presume that $\theta$ follows a proper prior distribution and make probabilistic statements based on the conditional distribution of $\theta$ given $y$. When it is difficult to specify an informative prior, one may still rely on objective priors that reflect paucity of knowledge or information \citep{KassWasserman1996, Berger2006, Berger2015}. We next revisit the definition of a Bayesian posterior through the lens of ABC, as well as \citeauthor{HannigEtAl2016}'s \citeyearpar{HannigEtAl2016} definition of GFD: The latter replaces the prior sampling of parameters in ABC by an optimization problem in the parameter space, which is a natural workaround when no prior information is available.

\subsection{Approximate Bayesian Computation}
\label{ss:abc}
Let $\rho$ denote the density of $U$, and $\pi$ be the prior density of $\theta$; we only restrict to density functions with respect to the Lebesgue measure and assume that random number generation from $\rho$ and $\pi$ is feasible. Given the observed data $y$ and a pre-specified tolerance level $\varepsilon>0$, ABC is a computational procedure that repeatedly executes the following steps: 
\begin{longlist}
  \item sample $U\sim\rho$;
  \item sample $\theta\sim\pi$ independent of $U$;
  \item accept the draws if $\|G(U, \theta) - y\|\le \varepsilon$ and otherwise reject.
\end{longlist}
The above accept-reject sampling scheme constructs a truncated distribution on $\Upsilon\times\Theta$ with the following density:
\begin{equation}
  \pi_\varepsilon(u, \theta; y) \propto \pi(\theta)\rho(u)\,
  \ind_{\{\|G(u, \theta) - y\|\le \varepsilon\}}(u, \theta),
  \label{eq:abcjoint}
\end{equation}
in which $\|\cdot\|$ denotes the $\ell_2$-norm on the data space $\cY$, and $\ind_A$ denotes the indicator function for a set $A$. Integrating out $u$ results in
\begin{equation}
  \pi_\varepsilon(\theta; y) \propto \pi(\theta)
  \pr\{\|G(U, \theta) - y\|\le \varepsilon|\theta\}.
  \label{eq:abcpost}
\end{equation}
Suppose that $Y$ has an absolutely continuous density $f(y|\theta)$ with respect to the Lebesgue measure on $\cY$, and that $y$ is in the interior of $\cY$. (\ref{eq:abcpost}) approximates the posterior 
\begin{equation*}
  \pi(\theta|y) \propto\pi(\theta)f(y|\theta),
\end{equation*}
because
\begin{equation*}
  f(y|\theta) = \lim_{\varepsilon\downarrow 0}\frac{\pr\{\|G(U, \theta) - y\|\le \varepsilon|\theta\}}{\lambda_{\cY}\{w\in\cY:\|w - y\|\le \varepsilon\}}
\end{equation*}
pointwise in $\theta$, where $\lambda_\cY$ denotes the Lebesgue measure on $\cY$, and thus $\pr\{\|G(U, \theta) - y\|\le \varepsilon|\theta\}$ is approximately proportional to $f(y|\theta)$ when $\varepsilon$ is small.

It is recognized that a more general definition of ABC is available in the literature. The accept/rejection step in our introduction corresponds to the use of a bounded uniform kernel supported on $\ell_2$-balls centered around the observed data; other probabilistic kernels can be used and the corresponding limiting results have been established. Readers are referred to \citet{Beaumont2019}, \citet{MarinEtAl2012}, and \citet{SissonEtAl2018} for more comprehensive surveys of ABC.

\subsection{Generalized Fiducial Inference}
\label{ss:afc}
When prior information about $\theta$ is absent, we can no longer sample $\theta\sim\pi$ in Step \romannumeral2) of the ABC recipe. Nevertheless, we are still able to determine whether the imputed random component $U$ can possibly reproduce the observed data $y$ (up to the pre-specified tolerance $\varepsilon$). Let 
\begin{equation}
  \hat\theta(y, U) = \argmin_{\vartheta\in\Theta}\|G(U, \vartheta) - y\|.
  \label{eq:thetahat}
\end{equation}
The rationale of GFI is to pair each $U$ with the parameter values $\hat\theta(y, U)$ such that $G(U, \hat\theta(y, U))$ gives the closest approximation to $y$.\footnote{$\hat\theta(y, u)$ is assumed to uniquely exist for each $u$ (cf. \romannumeral3) in Assumption \ref{as:reg}).} ABC can then be modified into a Monte Carlo recipe for (approximate) GFI once we replace the prior sampling step by setting $\theta$ to $\hat\theta(y, U)$ and leave everything else intact. This modified procedure simulates from a truncated distribution on $\Upsilon$ with density
\begin{equation}
  \psi_\varepsilon(u) \propto \rho(u)\ind_{ \left\{\|G(u, \hat\theta(y, u)) - y\|\le\varepsilon \right\}}(u),
  \label{eq:fidu}
\end{equation}
which further induces a distribution on $\Theta$ via the map $\hat\theta(y, \cdot)$. 

\citet{HannigEtAl2016} went one step further and defined the GFD as the weak limit of $\hat\theta(y, U)$, wherein $U$ follows (\ref{eq:fidu}), as $\varepsilon\downarrow 0$. Assuming $n = m$ and several regularity conditions on the DGE (Assumptions A.1--A.4), \citet{HannigEtAl2016} showed that the density of the GFD can be expressed as 
\begin{align}
  & \psi(\theta; y) \propto f(y|\theta)\cr
  & \cdot\det\left(\nabla_\theta G(\hat u(y, \theta), \theta)\t\nabla_\theta G(\hat u(y, \theta), \theta)\right)^{1/2}
  \label{eq:fid2016}
\end{align}
in which $\hat u(y, \theta)\in\Upsilon$ satisfies $y = G(\hat u(y, \theta), \theta)$, and $\nabla_\theta G(u, \theta)$ denotes the $n\times q$ Jacobian matrix of $G(u, \theta)$ with respect to $\theta$.\footnote{The assumed regularity conditions guarantee that $\hat u(y, \theta)$ uniquely exists, and that the Jacobian matrix is defined and of full column rank.}

(\ref{eq:fid2016}) conveys an empirical Bayesian interpretation of GFI---the determinant term on the right-hand side of (\ref{eq:fid2016}) can be conceived as a (possibly improper) data-dependent prior. Therefore, GFI in general does not comply with the likelihood principle \cite[e.g.,][Section 1.6.4]{Berger1985}. For instance, \citet{HannigEtAl2016} showed that substituting the $\ell_\infty$- and $\ell_1$-norm for the $\ell_2$-norm in (\ref{eq:fidu}) may lead to fiducial densities different from (\ref{eq:fid2016}) when $n > q$. More discussions on the likelihood principle can be found in Section \ref{ss:bayesfid}.

\subsection{An Illustrative Example}
\label{ss:ex}

Consider the Gaussian location model $Y\sim{\cal N}(\mu, 1)$ with the mean parameter $\mu\in\real$. For ease of graphical display, we focus on the transformed parameter $\theta = \Phi(\mu)\in(0, 1)$, where $\Phi(\cdot)$ denotes the distribution function of ${\cal N}(0, 1)$. We express the corresponding DGE as
\begin{equation}
  Y = \Phi^{-1}(U) + \Phi^{-1}(\theta),
  \label{eq:dgeloc}
\end{equation}
in which $U\sim\mathrm{Unif}(0, 1)$, and $\Phi^{-1}$ is the inverse of $\Phi$ (i.e., the standard Gaussian quantile function). The observed data $y$ value is fixed at $-0.5$.

For Bayesian inference, suppose that $\theta$ follows a $\mathrm{Unif}(0, 1)$ prior, which implies a ${\cal N}(0, 1)$ prior for the mean $\mu$. It is straightforward to verify that the posterior density is
\begin{equation}
\pi(\theta|y) = \phi\left( \Phi^{-1}(\theta) \right)^{-1}\sqrt{2}\phi\left(\sqrt{2}(\Phi^{-1}(\theta) - y/2)\right),\kern-5pt
  \label{eq:postex}
\end{equation}
where $\phi(\cdot)$ stands for the standard Gaussian density. Following the ABC recipe, we simulated $U$ and $\theta$ independently from $\mathrm{Unif}(0, 1)$, shown as evenly scattered dots over $\Upsilon\times\Theta = (0, 1)^2$ on the left panel of Figure \ref{fig:abcgfi}. With a tolerance $\varepsilon = 0.05$, only $(u, \theta)\t$ pairs that satisfy $|\Phi^{-1}(u) + \Phi^{-1}(\theta) - (- 0.5)|\le 0.05$ (dark gray colored dots) survive in the accept-reject step. The empirical $\theta$-marginal distribution of the retained draws closely resembles (\ref{eq:postex}). 

\begin{figure*}[!t]
  \centering
  \includegraphics[width=\textwidth]{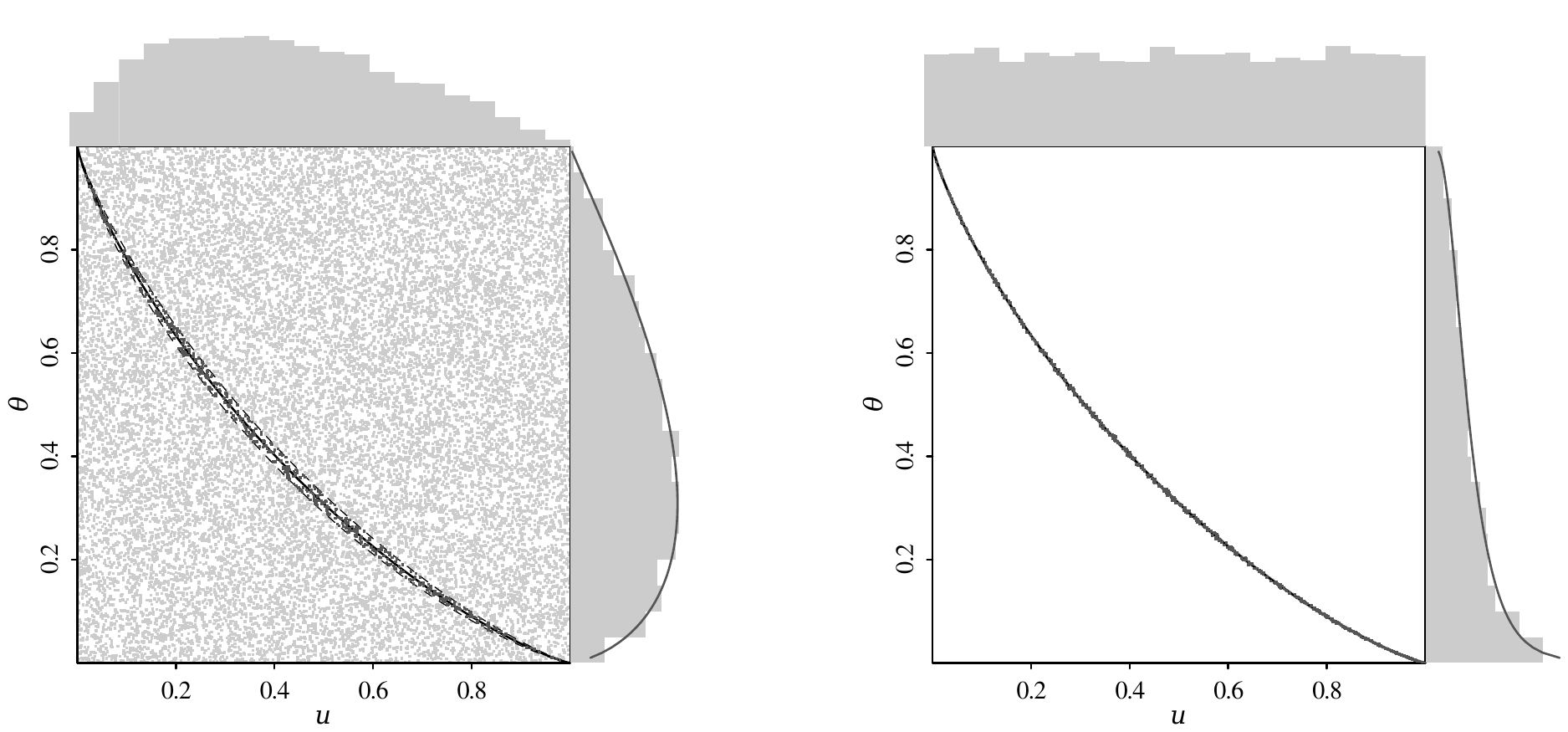}
  \caption{Graphic illustration of the Gaussian location example with $y = -0.5$. Left: approximate Bayesian computation. Samples of random components ($u$) and parameters ($\theta$) are represented as light gray dots in the unit square. $(u, \theta)\t$ pairs that are sufficiently close to the curve $y = \Phi^{-1}(u) + \Phi^{-1}(\theta)$ are kept and highlighted in dark gray (acceptance rate = $2.64\%$). The empirical marginal distributions of the retained samples are displayed as histograms, with the theoretical posterior superimposed on the $\theta$-marginal. Right: fiducial inference. 100\% of the imputed $u$'s are accepted, and each $u$ is paired with $\theta = \Phi(y - \Phi^{-1}(u))$. The empirical marginal distributions of the retained samples are displayed as histograms, with the theoretical fiducial density superimposed on the $\theta$-marginal. Dots fall exactly on the curve but are slightly jittered for clearer visualization.
}
  \label{fig:abcgfi}
\end{figure*}

Meanwhile, the fiducial density (\ref{eq:fid2016}) reduces to\footnote{The normalizing constant is 1.}
\begin{equation}
  \psi(\theta; y) = \phi\left(\Phi^{-1}(\theta)\right)^{-1}\phi\left(y - \Phi^{-1}(\theta)\right).
  \label{eq:fidex}
\end{equation}
For all $u \in (0, 1)$, $\hat\theta(-0.5, u)$ = $\Phi(- 0.5 - \Phi^{-1}(u))$ ensures $|\Phi^{-1}(u) + \Phi^{-1}(\hat\theta(y, u)) - (-0.5)| = 0$. Therefore, all the imputed $u$'s are retained regardless of the value of $\varepsilon$ in the simulation-based fiducial recipe. We associate each $u$ with $\theta = \hat\theta(-0.5, u)$ and plot $(u, \theta)\t$ on the right panel of Figure \ref{fig:abcgfi}. It is observed that the $u$-marginal distribution remains uniform, and (\ref{eq:fidex}) can be well approximated by the histogram of $\theta$.

We learn from the aforementioned illustration that, on the joint space of $u$ and $\theta$, simulation-based Bayesian and fiducial inferences produce distributions that concentrate on 
\begin{equation}
  \cG(y) = \{(u, \theta)\t\in(0, 1)^2: \Phi^{-1}(u) + \Phi^{-1}(\theta) = y\}
  \label{eq:Gex}
\end{equation}
as $\varepsilon\downarrow 0$. $\cG(y)$ collects all the $(u, \theta)\t$ pairs that satisfy the DGE, i.e., (\ref{eq:dgeloc}) with $Y = y$ and $U = u$,  and is geometrically identified as a one-dimensional smooth submanifold embedded in $(0, 1)^2$ (shown as the black solid curve in Figure \ref{fig:abcgfi}). Similar characterizations can be established in a broader class of statistical models for continuous data, which we explicate in the next section.

\section{Geometry of Post-Data Inference}
\label{s:geom}

We have seen in our previous discussion that both the accept-reject ABC and the simulation-based fiducial recipe involve restricting ambient distributions to regions whose sizes are controlled by $\varepsilon$ (see (\ref{eq:abcjoint}) and (\ref{eq:fidu}) for details). We pay heed to the special case that the regions of truncation contract to a twice continuously differentiable submanifold as $\varepsilon\downarrow 0$.

\subsection{General Constraints}
\label{ss:restrict}

Our first result (Theorem \ref{thm:gen}) is completely general: It concerns the weak convergence of a sequence of truncated distributions to a limit that is supported on an implicitly defined submanifold. The proof can be found in Appendix A in the supplementary document.

Let $h:\cX\to\real^n$ be a constraint function, where $\cX$ is an open subset of $\real^d$ and $d > n$. The level set of $h$ at 0 is denoted $\cM = \{x\in\cX: h(x) = 0\}$, and the $\varepsilon$-fattening of $\cM$, where $\varepsilon > 0$, is denoted $\cM^\varepsilon =\{x\in\cX: \|h(x)\| \le\varepsilon\}$. Write $a: \cX\to[0, \infty)$ as an ambient density function.\footnote{Although the function $a$ is not necessarily integrable over the entire ambient space $\cX$, it is referred to as a density function here: Integrable and non-integrable $a$'s are respectively termed as {\it improper} and {\it proper} densities.} Further let $P_\varepsilon$ be the truncation of $a$ to $\cM^\varepsilon$ and is characterized by the density $a(x)\ind_{\cM^\varepsilon}(x)/\int_{\cM^\varepsilon}a(x)dx$.

\begin{assumption}
  Suppose that
  \begin{longlist}
  \item $h$ is a twice continuously differentiable submersion, and thus $\cM$ is a twice continuously differentiable  submanifold of $\cX$, which is equipped with a Riemannian measure $\lambda_\cM$;\footnote{A submersion is a differentiable map, whose differential is surjective at every $x$. The Riemannian measure of the submanifold $\cM$ is induced by the Euclidean metric on the ambient space $\cX$ \citep[][Chapter 13]{Lee2013}.}
  \item $a$ is continuous, $\lambda_\cM\{\supp(a)\cap \cM\} > 0$, and $0 < \int_{\cM^\varepsilon}a(x)dx < \infty$ for all $\varepsilon > 0$;
  \item the collection of probability measures $\{P_\varepsilon: \varepsilon > 0\}$ is tight.
  \end{longlist}
  \label{as:gen}
\end{assumption}

\begin{theorem}
  Under Assumption \ref{as:gen}, $P_\varepsilon\rightsquigarrow P_0$ as $\varepsilon\downarrow 0$, where $P_0$ has the following absolutely continuous density with respect to  $\lambda_\cM$:
  \begin{equation}
    f(x) = \frac{a(x)\det\left(\nabla h(x)\nabla h(x)\t \right)^{-1/2}}{\int_\cM a(w)\det\left(\nabla h(w)\nabla h(w)\t \right)^{-1/2}\lambda_\cM(dw)}\kern-13pt
    \label{eq:dns}
  \end{equation}
  for $x\in\cM$.
  \label{thm:gen}
\end{theorem}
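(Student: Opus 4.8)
The plan is to reduce the statement to a local computation near $\cM$ via a tubular-neighbourhood parametrization, establish the weak limit by testing against bounded continuous functions, and then recognize the resulting density. First I would invoke the submersion hypothesis in \romannumeral1) of Assumption \ref{as:gen}: since $\nabla h(x)$ has full row rank $n$ on $\cM$, the implicit function theorem gives, locally around any $x_0\in\cM$, a diffeomorphism splitting coordinates into a ``tangential'' part $s$ ranging over an open subset of $\cM$ and a ``normal'' part; more convenient is to use the map $\Psi(s, t) = s + N(s)t$ where $N(s)$ is an orthonormal basis of the normal space at $s\in\cM$, defined on $\cM\times B_\delta$ for small $\delta$. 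Because $h$ is $C^2$, the Jacobian of $h\circ\Psi$ in $t$ at $t=0$ is $\nabla h(s)N(s)$, which is an $n\times n$ invertible matrix with $\det\bigl(\nabla h(s)N(s)\bigr)^2 = \det\bigl(\nabla h(s)\nabla h(s)\t\bigr)$ (the normal columns are orthonormal and span the row space of $\nabla h(s)$). The change of variables $x = \Psi(s,t)$ has Jacobian factor that tends to $1$ as $t\to 0$ (it equals the Riemannian volume element of $\cM$ at $s$ to leading order, by the Gauss-type formula for tubular coordinates), so that $dx = (1 + O(|t|))\,\lambda_\cM(ds)\,dt$.

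Next I would rewrite, for a bounded continuous test function $g$,
\begin{equation*}
  \int_{\cM^\varepsilon} g(x)\,a(x)\,dx = \int_\cM\!\!\int_{\{t:\,\|h(\Psi(s,t))\|\le\varepsilon\}} g(\Psi(s,t))\,a(\Psi(s,t))\,(1+O(|t|))\,dt\,\lambda_\cM(ds),
\end{equation*}
valid for $\varepsilon$ small enough that $\cM^\varepsilon$ lies in the tubular neighbourhood (tightness from \romannumeral3) lets me ignore the mass outside a compact set, and a partition-of-unity argument patches the local charts together). For fixed $s$, substitute $r = h(\Psi(s,t))$; since $t\mapsto h(\Psi(s,t))$ is a $C^1$ diffeomorphism near $0$ with derivative $\nabla h(s)N(s) + O(|t|)$, the inner integral becomes $\int_{\|r\|\le\varepsilon} g\,a\,\bigl|\det(\nabla h(s)N(s))\bigr|^{-1}(1+o(1))\,dr$. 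Dividing numerator (with $g$) by denominator (with $g\equiv 1$), the factors $\int_{\|r\|\le\varepsilon}dr = c_n\varepsilon^n$ cancel, and by continuity of $g$ and $a$ and dominated convergence (justified by tightness and the integrability in \romannumeral2)), the ratio converges to
\begin{equation*}
  \frac{\int_\cM g(s)\,a(s)\,\det\bigl(\nabla h(s)\nabla h(s)\t\bigr)^{-1/2}\,\lambda_\cM(ds)}{\int_\cM a(s)\,\det\bigl(\nabla h(s)\nabla h(s)\t\bigr)^{-1/2}\,\lambda_\cM(ds)},
\end{equation*}
which is exactly integration of $g$ against the density \eqref{eq:dns}. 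Weak convergence $P_\varepsilon\rightsquigarrow P_0$ follows from the portmanteau theorem.

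The main obstacle I anticipate is controlling the error terms uniformly: the $O(|t|)$ in the volume element and the $o(1)$ in the substitution $r = h(\Psi(s,t))$ must be shown to vanish uniformly over the relevant range of $s$ and over $\|t\|$ of order $\varepsilon$. On a compact subset of $\cM$ this is routine from $C^2$-smoothness and compactness, but $\cM$ itself need not be compact (only $\supp(a)\cap\cM$ has positive measure), so I would use tightness of $\{P_\varepsilon\}$ from \romannumeral3) to first restrict attention to a compact $K\subset\cX$ carrying all but $\eta$ of the mass uniformly in $\varepsilon$, carry out the estimate on $K$, and then let $\eta\downarrow 0$. A secondary technical point is verifying that the denominator $\int_\cM a\,\det(\nabla h\nabla h\t)^{-1/2}\,\lambda_\cM$ is positive and finite: positivity follows from $\lambda_\cM\{\supp(a)\cap\cM\} > 0$ together with continuity of $a$ and of the determinant (which is bounded away from $0$ on compacta by the submersion property), and finiteness follows by the same tubular-coordinate change of variables applied to the hypothesis $\int_{\cM^\varepsilon}a\,dx < \infty$ in \romannumeral2).
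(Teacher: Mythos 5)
Your overall strategy is sound and is essentially the route the paper takes: your tubular-coordinate change of variables $x=\Psi(s,t)$, with the identity $\det\bigl(\nabla h(s)N(s)\bigr)^2=\det\bigl(\nabla h(s)\nabla h(s)\t\bigr)$ and the cancellation of the $\varepsilon$-ball volume $c_n\varepsilon^n$ in the ratio, is a hands-on local derivation of the co-area disintegration
\begin{equation*}
  \int_{\cM^\varepsilon}a(x)\,dx=\int_{\{\|r\|\le\varepsilon\}}\Bigl[\int_{h^{-1}(r)}a(w)\det\bigl(\nabla h(w)\nabla h(w)\t\bigr)^{-1/2}\lambda_{h^{-1}(r)}(dw)\Bigr]dr,
\end{equation*}
which is exactly the mechanism behind the paper's argument (cf.\ Remark \ref{rmk:coarea}), and your use of tightness to confine the estimates to a compact set before letting the exceptional mass $\eta\downarrow 0$ is the right way to handle non-compact $\cM$.

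There is, however, one genuine error: your claimed justification that the normalizing constant $\int_\cM a\det(\nabla h\nabla h\t)^{-1/2}\,d\lambda_\cM$ is finite ``by the same tubular-coordinate change of variables applied to the hypothesis $\int_{\cM^\varepsilon}a\,dx<\infty$'' does not work. Finiteness of the left side of the display above for every $\varepsilon>0$ only controls the inner level-set integral for Lebesgue-almost every $r$, and says nothing about the slice $r=0$. The second half of the paper's Example \ref{ex:tight} refutes your reasoning: there $a(x)=(1+x_1)^{-2}(1+x_2)^{-2}$ is a proper density, so Assumption \ref{as:gen}~\romannumeral2) holds, yet on $\cM=\{x_2=0\}$ one has $\det(\nabla h\nabla h\t)^{-1/2}=x_1$ and $\int_\cM a\det(\nabla h\nabla h\t)^{-1/2}\lambda_\cM(dw)=\int_0^\infty x_1(1+x_1)^{-2}dx_1=\infty$ (and indeed the theorem's conclusion fails there because tightness fails). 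Finiteness must instead be extracted from Assumption \ref{as:gen}~\romannumeral3), as the paper does: with $K_\eta$ compact and $P_\varepsilon\{K_\eta\}\ge 1-\eta$ uniformly in $\varepsilon$, your compact-set limit applied to $\int_{\cM^\varepsilon\cap(K'\setminus K_\eta)}a\,dx\le\frac{\eta}{1-\eta}\int_{\cM^\varepsilon\cap K_\eta}a\,dx$ gives, after dividing by $c_n\varepsilon^n$ and letting $\varepsilon\downarrow0$, the bound $\int_{\cM\cap(K'\setminus K_\eta)}a\det(\nabla h\nabla h\t)^{-1/2}d\lambda_\cM\le\frac{\eta}{1-\eta}\int_{\cM\cap K_\eta}a\det(\nabla h\nabla h\t)^{-1/2}d\lambda_\cM<\infty$ uniformly in the compact $K'\supset K_\eta$, and letting $K'\uparrow\cX$ controls the tail. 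With the finiteness argument rerouted through tightness in this way (positivity from \romannumeral1) and \romannumeral2) is fine as you stated), the remainder of your proof goes through.
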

\begin{remark}
When a random variable $X$ follows a proper density $a$ in the ambient space $\cX$, (\ref{eq:dns}) can also be deduced as a conditional density of $X$ given $h(X) = 0$ \citep[][Proposition 2]{DiaconisEtAl2013} using the co-area formula (e.g., \citealp{Chavel2006}, Section III.8; \citealp{Federer1996}, Section 3.2.12; \citealp{LelievreEtAl2010}, Lemma 3.2). In this alternative derivation, the denominator of (\ref{eq:dns}) is interpreted as the marginal density of $h(X)$ at 0, which must be finite and positive (see \citealp{DiaconisEtAl2013}, p. 112). Specifically, positivity follows from \romannumeral1) and \romannumeral2) in Assumption \ref{as:gen}), and finiteness is a consequence of tightness, i.e., Assumption \ref{as:gen} \romannumeral3). Details can be found in the proof of Theorem \ref{thm:gen}.
  \label{rmk:coarea}
\end{remark}

\begin{remark}
  Theorem \ref{thm:gen} is inspired by Theorem 3.1 of \citet{Hwang1980}. Hwang's result was proved for a sequence of Gibbs measures that concentrate on the minimum of an energy function. The collection of minimum energy states, or equivalently the limiting manifold, is required to be compact, which is restrictive but often suffices for optimization purposes in statistical physics. In contrast, our result applies to sequentially restricting a known ambient distribution to finer approximations of the data generating manifold---i.e., sublevel sets of $h$, which is often not compact for parametric statistical models.
  \label{rmk:hwang}
\end{remark}

Assumption 1 \romannumeral3), i.e., the tightness of the measures $\{P_\varepsilon\}$, automatically holds if $\cM^\varepsilon$ is compact for sufficiently small $\varepsilon$'s. When all sublevel sets of $h$ are non-compact, however, tightness is determined by the tail behavior of the $\cM^\varepsilon$-restricted probability measures $\{P_\varepsilon\}$. Notably, $a$ being a proper ambient density alone does not guarantee tightness. To illustrate this, we present Example \ref{ex:tight} with a two-dimensional ambient space. It is demonstrated that $\{P_\varepsilon\}$ can still be tight when $a$ is improper but the sublevel set of $h$ tapers off quickly along the first coordinate of $x$ (i.e., $x_1$), and that $\{P_\varepsilon\}$ may not be tight when $a$ is proper but the sublevel set of $h$ rapidly expands as $x_1$ grows.

\begin{figure*}[!t]
  \centering
  \includegraphics[width=\textwidth]{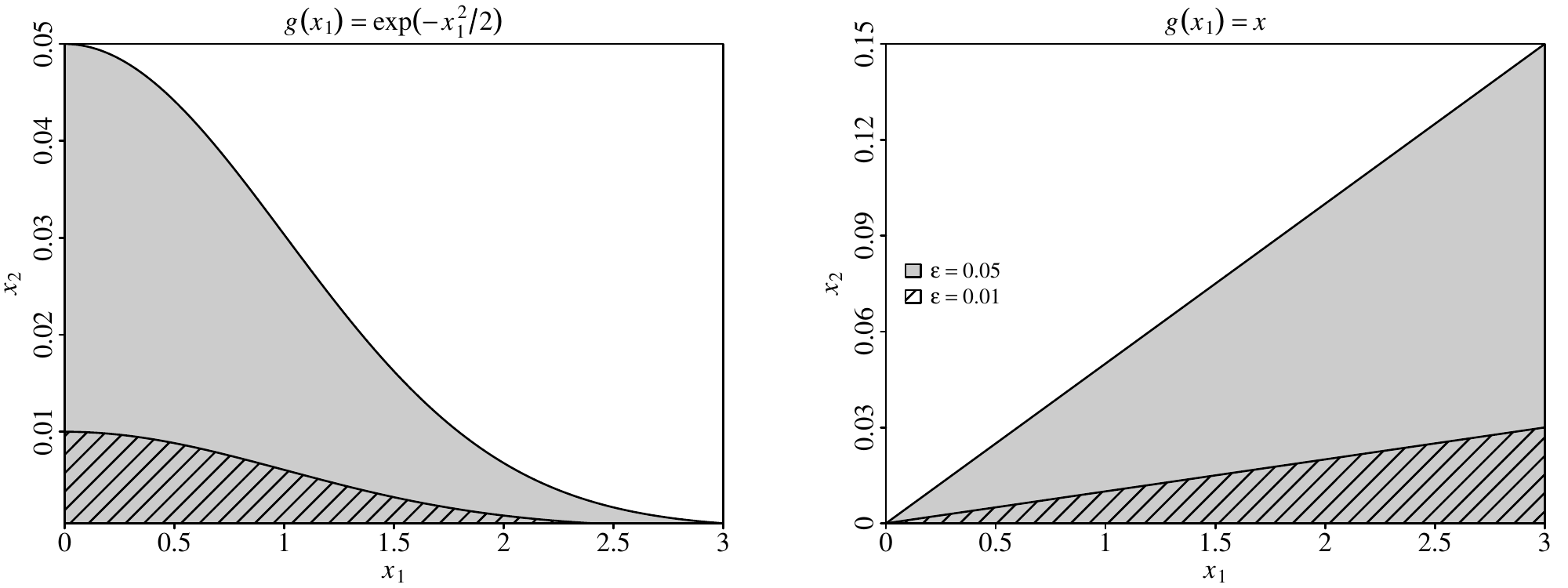}
  \caption{Sublevel sets $\cM^\varepsilon = \{x\in(0, \infty)^2:x_2 \le\varepsilon g(x_1)\}$ in Example \ref{ex:tight}, where $g$ is positive on $(0, \infty)$. Left: $g(x_1) = \exp(-x_1^2/2)$, which vanishes quickly as $x_1\to\infty$. Right: $g(x_1) = x$, which grows to infinity as $x_1\to\infty$. The gray and shaded regions correspond to the sublevel sets when $\varepsilon = 0.05$ and 0.01, respectively.}
  \label{fig:ex1}
\end{figure*}

\begin{example} 
Let $x = (x_1, x_2)\t\in(0, \infty)^2$ and consider the constraint function
\begin{equation}
  h(x) = \frac{x_2}{g(x_1)},
  \label{eq:hx}
\end{equation}
in which $g$ is positive on $(0, \infty)$. The resulting $\varepsilon$-fattened level set is
\begin{equation}
  \cM^\varepsilon = \{x\in(0, \infty)^2: x_2 \le\varepsilon g(x_1)\}.
  \label{eq:meps}
\end{equation}
As $\varepsilon\downarrow 0$, $\cM^\varepsilon\downarrow\cM = \{x\in(0, \infty)^2: x_2 = 0\}$. 

We first set $a(x)\equiv 1$ and $g(x_1) = \exp(-x^2/2)$ (left panel of Figure \ref{fig:ex1}). Even though $a(x)$ is not integrable on the ambient space $(0, \infty)^2$, $g(x_1)$ is integrable on $(0, \infty)$. Hence, $a(x)\ind_{\cM^\varepsilon}(x)/\int_{\cM^\varepsilon} a(x) dx$ is a valid density function that defines the probability measure $P_\varepsilon$. Consider the compact set $K = [0, C]\times [0, 1]$, in which $C>0$. For all $\varepsilon < 1$, 
\begin{align}
  P_\varepsilon\{K\} =\ & \frac{\int_0^C\left[\int_0^{\varepsilon\exp(-x_1^2/2)}dx_2\right]dx_1} {\int_0^\infty\left[\int_0^{\varepsilon\exp(-x_1^2/2)}dx_2\right]dx_1}\cr
  =\ & 2\Phi(C) - 1,
  \label{eq:outside}
\end{align}
which is constant in $\varepsilon$ and can be made arbitrarily close to 1 by choosing a large $C$. So the sequence $\{P_\varepsilon\}$ is tight.

Next, let $a(x) = (1+x_1)^{-2}(1+ x_2)^{-2}$ and $g(x) = x$ (see right panel of Figure \ref{fig:ex1}). $a(x)$ is the joint density of two independent $\hbox{Pareto}(1)$ variates and thus is proper. As $x\to\infty$, the tail probability of the Pareto distribution vanishes linearly while $g(x)$ increases linearly. For all $\varepsilon > 0$, 
\begin{align}
  &\int_{\cM^\varepsilon}a(x)dx\cr
  =\ &\int_0^\infty\left[\int_0^{\varepsilon x_1}(1 + x_2)^{-2}dx_2\right](1 + x_1)^{-2} dx_1\cr
  =\ & \frac{-\varepsilon(1-\varepsilon+\log\varepsilon)}{(1-\varepsilon)^2}.
  \label{eq:denom}
\end{align}
Consider the compact set $K = [0, C]^2$. Then for all $\varepsilon < 1$,
\begin{align}
  &\int_{\cM^\varepsilon\cap K^\complement}a(x)dx\cr
  =\ & \int_C^\infty\left[\int_0^{\varepsilon x_1}  (1+x_2)^{-2}dx_2\right](1+x_1)^{-2} dx_1\cr
  =\ & \frac{-\varepsilon\left[ 1-\varepsilon -(1+C)\log( \frac{1+C\varepsilon}{\varepsilon+C\varepsilon} ) \right]}{(1+C)(1-\varepsilon)^2},
  \label{eq:numer}
\end{align}
in which $K^\complement$ denotes the complement of $K$. The ratio of (\ref{eq:numer}) and (\ref{eq:denom}) gives the probability of $P_\varepsilon\{K^\complement\}$: As $\varepsilon\downarrow 0$,
\begin{align}
  & \frac{\int_{\cM^\varepsilon\cap K^\complement}a(x)dx}{\int_{\cM^\varepsilon}a(x)dx}\cr
  =\ & \frac{1-\varepsilon-(1+C)\log( \frac{1+C\varepsilon}{\varepsilon+C\varepsilon} )}{(1+C)(1-\varepsilon+\log\varepsilon)} \to 1.
  \label{eq:lopital}
\end{align}
As such, the truncated sequence $\{P_\varepsilon\}$ eventually places all the mass outside $K$ for all $C$ and thus cannot be tight.
  \label{ex:tight}
\end{example}

\subsection{Data Generating Manifold}
\label{ss:dgm}
Given a general DGE $G: \Upsilon\times\Theta\to\cY$, observed data $y\in\cY$, and an $\varepsilon> 0$, let
\begin{align}
  \cG^\varepsilon(y) =\ & \{(u\t, \theta\t)\t\in\Upsilon\times\Theta:\cr
  &\ \|G(u, \theta) - y\|\le\varepsilon\},
  \label{eq:Geps}
\end{align}
and its set-theoretic limit
\begin{equation}
  \cG(y) = \{(u\t, \theta\t)\t\in\Upsilon\times\Theta: G(u, \theta) = y\}.
  \label{eq:G0}
\end{equation}
In general, $\cG(y)$ may or may not have a positive Lebesgue measure on $\real^{m + q}$. A further special case of the latter is of interest to us---when $\cG(y)$ is a submanifold of $\Upsilon\times\Theta\subseteq\real^{m + q}$. In this case, we call $\cG(y)$ and $\cG^\varepsilon(y)$ the \textit{data generating manifold} and its \textit{$\varepsilon$-fattening}, respectively. Also denote the \textit{$u$-projections} of $\cG^\varepsilon(y)$ and $\cG(y)$ by
\begin{equation}
  \cU^\varepsilon(y) = \Big\{u\in\Upsilon: \min_{\vartheta\in\Theta}\|G(u, \vartheta) - y\|\le\varepsilon\Big\},
  \label{eq:Ueps}
\end{equation}
and
\begin{equation}
  \cU(y) = \Big\{u\in\Upsilon:\min_{\vartheta\in\Theta}\|G(u, \vartheta) - y\| = 0\Big\},
  \label{eq:U0}
\end{equation}
respectively. $\cG^\varepsilon(y)$ and $\cU^\varepsilon(y)$ are regions of truncation in simulation-based Bayesian and fiducial inference (see Sections \ref{ss:abc} and \ref{ss:afc}). Finally, let 
  \begin{equation}
    \cG_\theta(y) = \{u\in\Upsilon: G(u, \theta) = y\}
    \label{eq:thetasec}
  \end{equation}
  be the \textit{$\theta$-section} of $\cG(y)$ for each $\theta\in\Theta$.  By definition, $\cG(y) = \{(u\t, \theta\t)\t: \theta\in\Theta, u\in\cG_\theta(y)\}$ and $\cU(y) = \bigcup_{\theta\in\Theta}\cG_\theta(y)$.

The following assumptions are made throughout the rest of the paper.

\begin{assumption}
  Let $\Upsilon$ and $\Theta$ be open subsets of $\real^m$ and $\real^q$, respectively. Assume that
  \begin{longlist}
    \item $G: \Upsilon\times\Theta\to\real^n$ is three-time continuously differentiable;
    \item  the $n\times m$ Jacobian matrix $\nabla_u G(u, \theta)$ has full row rank, and the $n\times q$ Jacobian matrix $\nabla_\theta G(u, \theta)$ has full column rank;
    \item for a given $u$, $\hat\theta(y, u)$ defined by (\ref{eq:thetahat}) is unique.
  \end{longlist}
  \label{as:reg}
\end{assumption}
  \begin{remark}
    Assumption \ref{as:reg} requires that the DGE $G$ is sufficiently smooth in both $u$ and $\theta$, and that the optimal parameter $\hat\theta$ in reproducing the observed $y$ is uniquely identified for each $u$. Both requirements do not apply to parametric models for discrete data \citep[e.g.,][]{Dempster1966, Dempster1968, Stevens1950, Hannig2009}. The focus on continuous data models in the present article bears a resemblance with Fisher's fiducial inference in the early days \citep[e.g.,][]{Fisher1930, Fisher1933}.
    \label{rmk:model}
  \end{remark}

  Immediate consequences of Assumption \ref{as:reg} are that $\cG(y)$ is an $(m + q - n)$-dimensional submanifold of $\Upsilon\times\Theta$, and that $\cG_\theta(y)$ is an $(m - n)$-dimensional submanifold of $\Upsilon$. But more importantly, Assumption \ref{as:reg} implies the isomorphism between $\cG(y)$ and $\cU(y)$, which is summarized as Lemma \ref{lem:iso}. The proof can be found in Appendix B of the supplementary document.

\begin{lemma}
  \label{lem:iso}
  Under Assumption \ref{as:reg}, $\cG(y)\subset\Upsilon\times\Theta\subseteq\real^{m+q}$ is isomorphic to $\cU(y)\subset\Upsilon\subseteq\real^m$, both of which are  twice continuously differentiable submanifolds of dimension $m + q - n$. In particular, $\cU(y)$ can be directly defined as the level set
\begin{align}
  \cU(y) =\ & \{u\in\Upsilon: \overline{\nabla_\theta G}(u, \hat\theta(y, u))\t\cr
  &\ \cdot[G(u, \hat\theta(y, u)) - y] = 0\},
  \label{eq:caluy}
\end{align}
in which $\overline{\nabla_\theta G}(u, \theta)$ is an $n\times(n - q)$ orthogonal complement of $\nabla_\theta G(u, \theta)$ that has orthonormal columns and varies smoothly along $u$ and $\theta$, and 
\begin{equation}
  \cG(y) = \{(u\t, \hat\theta(y, u)\t)\t: u\in\cU(y)\}.
  \label{eq:iso}
\end{equation}
In addition, the intrinsic measures of $\cG(y)$ and $\cU(y)$ satisfy
  \begin{equation}
    \lambda_{\cU(y)}(du) = D(u, \theta)^{-1/2}\lambda_{\cG(y)}(du, d\theta),
    \label{eq:com}
  \end{equation}
  In (\ref{eq:com}),
  \begin{align}
    & D(u, \theta) = \det\bigg(\id_q + \Big[ 
        \nabla_\theta G(u, \theta)\t\cr
        &\ \cdot(\nabla_u G(u, \theta)\nabla_u G(u, \theta)\t)^{-1}\nabla_\theta G(u, \theta)
    \Big]^{-1}\bigg),
    \label{eq:Dterm}
  \end{align}
in which $\id_q$ denotes a $q\times q$ identity matrix. 
\end{lemma}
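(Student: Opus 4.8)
The plan is to build the isomorphism explicitly from the map $\Pi:\cG(y)\to\Upsilon$, $(u\t,\theta\t)\t\mapsto u$, and show it is a diffeomorphism onto its image $\cU(y)$. First I would verify that $\Pi$ is injective on $\cG(y)$: if $(u\t,\theta_1\t)\t$ and $(u\t,\theta_2\t)\t$ both lie in $\cG(y)$, then $G(u,\theta_1)=y=G(u,\theta_2)$, so both $\theta_1$ and $\theta_2$ minimize $\|G(u,\vartheta)-y\|$ (attaining the value $0$), and uniqueness of $\hat\theta(y,u)$ (Assumption \ref{as:reg}\,\romannumeral3)) forces $\theta_1=\theta_2=\hat\theta(y,u)$. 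This simultaneously proves the representation (\ref{eq:iso}) and shows $\cU(y)=\Pi(\cG(y))$ is exactly the set of $u$ with $\min_\vartheta\|G(u,\vartheta)-y\|=0$, matching (\ref{eq:U0}). That $\cG(y)$ is an $(m+q-n)$-dimensional twice continuously differentiable submanifold is immediate from Assumption \ref{as:reg}\,\romannumeral1)--\romannumeral2): $G(u,\theta)-y$ is a $C^3$ (hence $C^2$) submersion near $\cG(y)$ because $\nabla_u G$ has full row rank $n$, so the level set is a $C^2$ manifold of the stated dimension by the regular value / implicit function theorem.

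Next I would derive the defining equation (\ref{eq:caluy}) for $\cU(y)$. The first-order optimality condition for the unconstrained minimization $\min_\vartheta\|G(u,\vartheta)-y\|^2$ at an interior optimum $\hat\theta(y,u)$ is $\nabla_\theta G(u,\hat\theta(y,u))\t[G(u,\hat\theta(y,u))-y]=0$; this holds for every $u\in\Upsilon$ by Assumption \ref{as:reg}\,\romannumeral3). The residual $r(u):=G(u,\hat\theta(y,u))-y$ therefore always lies in the orthogonal complement of the column space of $\nabla_\theta G(u,\hat\theta(y,u))$, so $r(u)=\overline{\nabla_\theta G}(u,\hat\theta(y,u))\,s(u)$ for a unique $s(u)\in\real^{n-q}$, and $r(u)=0$ iff $s(u)=\overline{\nabla_\theta G}(u,\hat\theta(y,u))\t r(u)=0$, which is precisely the constraint in (\ref{eq:caluy}). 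To conclude $\cU(y)$ is a $C^2$ submanifold of the asserted dimension, I would check that the map $u\mapsto\overline{\nabla_\theta G}(u,\hat\theta(y,u))\t r(u)$ is a $C^2$ submersion in a neighborhood of $\cU(y)$: smoothness of $\hat\theta(y,\cdot)$ follows from the implicit function theorem applied to the optimality condition (the relevant Hessian block is invertible since $\nabla_\theta G$ has full column rank), smoothness of a local orthonormal complement $\overline{\nabla_\theta G}$ is standard, and surjectivity of the differential can be read off the fact that it is the "reduced" version of $\nabla_u[G(u,\theta)-y]=\nabla_u G$, which has full row rank $n$; projecting out the $q$-dimensional $\theta$-direction leaves an $(n-q)\times m$ map of full row rank. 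Together with the chain of diffeomorphisms — $\cG(y)\cong\cU(y)$ via $\Pi$ with smooth inverse $u\mapsto(u\t,\hat\theta(y,u)\t)\t$ — this gives the isomorphism claim.

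For the change-of-measure formula (\ref{eq:com}), I would parametrize locally: if $u=u(t)$ is a local chart of $\cU(y)$ with $t$ ranging over an open set in $\real^{m+q-n}$, then $(u(t)\t,\hat\theta(y,u(t))\t)\t$ is the corresponding chart of $\cG(y)$. The Riemannian volume elements are $\sqrt{\det(J_\cU\t J_\cU)}\,dt$ and $\sqrt{\det(J_\cG\t J_\cG)}\,dt$ where $J_\cU=\partial u/\partial t$ and $J_\cG=[\,(\partial u/\partial t)\t,\ (\partial\hat\theta/\partial t)\t\,]\t$. Since $\partial\hat\theta/\partial t=(\partial\hat\theta/\partial u)(\partial u/\partial t)$, writing $B:=\partial\hat\theta/\partial u$ (an $q\times m$ matrix) gives $J_\cG=\begin{bmatrix}\id_m\\ B\end{bmatrix}J_\cU$, so $J_\cG\t J_\cG=J_\cU\t(\id_m+B\t B)J_\cU$ and hence $\det(J_\cG\t J_\cG)=\det(\id_m+B\t B)\,\det(J_\cU\t J_\cU)$. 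The remaining task is to identify $\det(\id_m+B\t B)=\det(\id_q+BB\t)$ (Sylvester) with the determinant $D(u,\theta)$ in (\ref{eq:Dterm}); this is the one genuinely computational step. Differentiating the optimality identity $\nabla_\theta G(u,\hat\theta(y,u))\t[G(u,\hat\theta(y,u))-y]=0$ with respect to $u$ and solving for $B=\partial\hat\theta/\partial u$ (neglecting the residual-times-second-derivative term, which vanishes on $\cU(y)$ where $r(u)=0$) yields $B=-(\nabla_\theta G\t\nabla_\theta G)^{-1}\nabla_\theta G\t\nabla_u G$; then $BB\t=(\nabla_\theta G\t\nabla_\theta G)^{-1}\nabla_\theta G\t(\nabla_u G\nabla_u G\t)\nabla_\theta G(\nabla_\theta G\t\nabla_\theta G)^{-1}$, and a short manipulation using $\id_q+BB\t$ and the matrix identity $\det(\id+XY)=\det(\id+YX)$ recasts this as the expression (\ref{eq:Dterm}) involving $[\nabla_\theta G\t(\nabla_u G\nabla_u G\t)^{-1}\nabla_\theta G]^{-1}$. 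The main obstacle I anticipate is precisely this last algebraic identification — keeping track of which Gram matrix is being inverted and confirming that the residual term really does drop on $\cU(y)$ — rather than any of the manifold-theoretic steps, which are routine applications of the implicit function theorem and the regular-value theorem.
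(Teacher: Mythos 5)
The first two thirds of your argument are sound and essentially what is required: injectivity of the projection via uniqueness of $\hat\theta(y,u)$, the regular-value argument for $\cG(y)$, the first-order-condition derivation of (\ref{eq:caluy}) (the residual lies in $\mathrm{col}(\overline{\nabla_\theta G})$, so it vanishes iff its $\overline{\nabla_\theta G}$-coordinates do), and the submersion check for $\cU(y)$ all go through. The genuine gap is in the change-of-measure step. Writing $J_\cG=\left(\begin{smallmatrix}\id_m\\ B\end{smallmatrix}\right)J_\cU$ is correct, but the factorization $\det(J_\cG\t J_\cG)=\det(\id_m+B\t B)\det(J_\cU\t J_\cU)$ is false: $J_\cU$ is $m\times(m+q-n)$, and you cannot pull a middle matrix out of a Gram determinant unless the chart Jacobian is square, i.e.\ unless $n=q$. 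The correct distortion factor is the determinant of $\id_m+B\t B$ \emph{restricted to the tangent space} $T_u\cU(y)=\{v:\nabla_u G(u,\theta)v\in\mathrm{col}(\nabla_\theta G(u,\theta))\}$, that is $\det\left(P\t(\id_m+B\t B)P\right)$ with $P$ an orthonormal basis of $T_u\cU(y)$, not $\det(\id_m+B\t B)$ itself.

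Consequently the quantity you plan to match to (\ref{eq:Dterm}), namely $\det(\id_q+BB\t)$ with $BB\t=(\nabla_\theta G\t\nabla_\theta G)^{-1}\nabla_\theta G\t\nabla_u G\nabla_u G\t\nabla_\theta G(\nabla_\theta G\t\nabla_\theta G)^{-1}$, is simply not equal to $D(u,\theta)$: your expression contains $\nabla_u G\nabla_u G\t$ whereas (\ref{eq:Dterm}) contains its inverse, and no determinant identity bridges the two; they coincide only in special cases such as $n=q$ or $\nabla_u G\nabla_u G\t\propto\id_n$ (which is why a sanity check on the Gaussian location example, where $n=q=1$, would not catch it). Concretely, take the linear DGE $G(u,\theta)=Au+C\theta$ with $A=\left(\begin{smallmatrix}1&0\\1&1\end{smallmatrix}\right)$, $C=(1,0)\t$, $y=0$, so $m=n=2$, $q=1$: then $\cG(0)$ is the line $t\mapsto(t,-t,-t)\t$ and $\cU(0)$ the line $t\mapsto(t,-t)\t$, so the ratio of length elements gives $D=3/2$, in agreement with (\ref{eq:Dterm}) since $C\t(AA\t)^{-1}C=2$, while your formula gives $\det(1+BB\t)=2$. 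The repair is to keep track of the tangent space: either evaluate $\det(P\t(\id_m+B\t B)P)$ using the characterization of $T_u\cU(y)$ above, or, more cleanly, take an orthonormal basis $Q=(Q_u\t,Q_\theta\t)\t$ of $\ker\nabla G(u,\theta)$, note that the distortion of the projection $\cG(y)\to\cU(y)$ is $\det(Q_u\t Q_u)^{1/2}$ and that $QQ\t$ is the orthogonal projection onto $\ker\nabla G$, whence $\id_q-Q_\theta Q_\theta\t=\nabla_\theta G\t\left(\nabla G\nabla G\t\right)^{-1}\nabla_\theta G$; Sylvester's determinant identity and the Woodbury formula then turn $\det(Q_u\t Q_u)^{-1}=\det\left(\nabla_\theta G\t(\nabla_u G\nabla_u G\t+\nabla_\theta G\nabla_\theta G\t)^{-1}\nabla_\theta G\right)^{-1}$ exactly into (\ref{eq:Dterm}).
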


In the light of Assumption \ref{as:reg} and Lemma \ref{lem:iso}, we highlight three different ways to interpret the dimension of the data generating manifold $\cG(y)$, i.e., $m + q - n$. 
\begin{longlist}
  \item $(m + q) - n$: Most obviously, the data generating manifold $\cG(y)$ is a submanifold of the $(m + q)$-dimensional space $\Upsilon\times\Theta$ that is implicitly defined by the $n$-dimensional constraint $G(u, \theta) - y = 0$. 
  \item $m - (n - q)$: By Lemma \ref{lem:iso}, $\cG(y)$ is isomorphic to its $u$-projection $\cU(y)$, which is a submanifold of the $m$-dimensional space $\Upsilon$ that is implicitly defined by the $(n - q)$-dimensional constraint $\overline{\nabla_\theta G}(u, \hat\theta(y, u))\t(G(u, \hat\theta(y, u)) - y)$. 
  \item  $q + (m - n)$: Each element of $\cG(y)$ is obtained by bundling a $\theta$ from the $q$-dimensional parameter space $\Theta$ with a $u$ from the $\theta$-section $\cG_\theta(y)$, which is an $(m - n)$-dimensional submanifold of $\Upsilon$.
\end{longlist}
ABC typically operates on the joint space $\Upsilon\times\Theta$ and thus naturally adopts the first view. Meanwhile, the second view is aligned with \citeauthor{HannigEtAl2016}'s \citeyearpar{HannigEtAl2016} treatment of GFI on the space $\Upsilon$. As will be elaborated in Section \ref{ss:bayesfid}, the third view links our geometric perspective back to the conventional definitions of Bayesian posteriors and GFDs.

\begin{example}
  \label{ex:basu}
\begin{figure*}[!t]
  \centering
  \includegraphics[width=\textwidth]{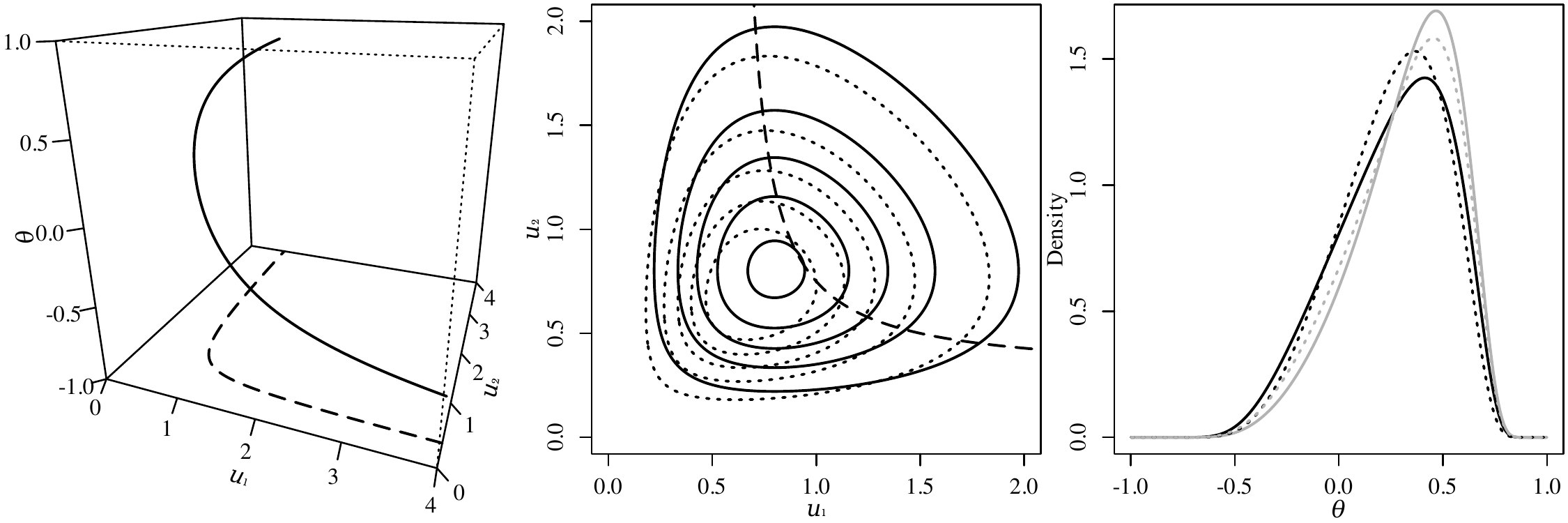}
  \caption{The bivariate Gaussian problem (see Example \ref{ex:basu}). The observed sufficient statistics $y = (1.2, 0.6)\t$, and the sample size $N = 10$. Left: Data generating manifold $\cG(y)$ and its $u$-projection $\cU(y)$. $\cG(y)$ is depicted as solid curves in the three-dimensional space for $(u_1, u_2, \theta)\t$, while $\cU(y)$ is shown as long dashed curves in the two-dimensional subspace for $(u_1, u_2)\t$. Middle: Ambient density contours on the two-dimensional space for $(u_1, u_2)\t$. For generalized fiducial inference (GFI), the ambient density is $\rho(u)$, which is the density of two independent and identically distributed $\chi^2_{N}/N$ random variables and is visualized by the solid contours. Meanwhile, the ambient density for Bayesian inference with the flat prior is given by (\ref{eq:atb1basu}), which is shown as the dotted contours after being normalized as a probability density (by a constant $\approx0.3774$, which is estimated via numerical quadrature). The five contours shown for each method map on to density values 0.1 to 0.9 at an interval of 0.2. The manifold $\cU(y)$ is superimposed as the long dashed curve. Right: $\theta$-marginal densities for GFI (solid) and Bayesian inference (dotted). For GFI, the density obtained from the original data generating equation (DGE) (\ref{eq:basudge}) and the transformed DGE (\ref{eq:basudgea}) are shown in black and gray, respectively. For Bayesian inference, the posterior density corresponding to the flat and Jeffreys priors are shown in black and gray, respectively. Normalizing constants for all three distributions are obtained by numerical quadrature.}
  \label{fig:basu}
\end{figure*}
Suppose that $(X_i, W_i)\t$, $i = 1,\dots, N$, are independent and identically distributed (i.i.d.) bivariate Gaussian random vectors with zero means, unit variances, and a correlation parameter $\theta\in(-1, 1)$. Consider the minimal sufficient statistics $Y_1 = (2N)^{-1}\sum_{i=1}^N(X_i + W_i)^2$ and $Y_2 = (2N)^{-1}\sum_{i=1}^N(X_i - W_i)^2$ for $\theta$. The associated DGE for $Y = (Y_1, Y_2)\t$ is given by
  \begin{equation}
    Y = G(U, \theta) = \left( (1 + \theta)U_1, (1 - \theta)U_2 \right)\t,
    \label{eq:basudge}
  \end{equation}
  in which $U = (U_1, U_2)\t$, and $U_1$ and $U_2$ are i.i.d. $\chi^2_N/N$ variates. In this problem, we have $m = n = 2$ and $q = 1$. Given observed statistics $y = (y_1, y_2)\t$, the data generating manifold $\cG(y)$ is
  \begin{align}
    \cG(y) =\ & \bigg\{(u, \theta)\t\in(0, \infty)^2\times(-1, 1):\cr
    &\ (1 + \theta)u_1 = y_1, (1 - \theta)u_2 = y_2\bigg\},
    \label{eq:Gbasu}
  \end{align}
  in which $u = (u_1, u_2)\t$. To obtain the $u$-projection $\cU(y)$, note that $\nabla_\theta G(u, \theta) = (u_1, -u_2)\t$, and thus its orthogonal complement $\overline{\nabla_\theta G}(u, \theta) = (u_2, u_1)\t/\sqrt{u_1^2 + u_2^2}$. 
    It follows that the $u$-projection of $\cG(y)$ can be expressed by 
    \begin{equation}
      \cU(y) = \left\{u\in(0,\infty)^2: \frac{2u_1u_2 - u_2y_1 - u_1y_2}{\sqrt{u_1^2 + u_2^2}}= 0\right\}.\kern-12pt
      \label{eq:Ubasu}
    \end{equation}
  The isomorphism between $\cG(y)$ and $\cU(y)$, which are both one-dimensional manifolds, is illustrated in the left panel of Figure \ref{fig:basu}. Finally, the $\theta$-section of $\cG(y)$ is a singleton
  \begin{equation}
    \cG_\theta(y) = \left\{\left(\frac{y_1}{1 + \theta}, \frac{y_2}{1 - \theta}\right)\t\right\},
    \label{eq:Gthetabasu}
  \end{equation}
  which amounts to a zero-dimensional manifold.

\end{example}

\subsection{Bayesian and Fiducial Inference}
\label{ss:bayesfid}

Before presenting our geometric characterization, we establish the following representation of the likelihood function.

\begin{lemma}
    \label{lem:lik}
    The likelihood function $f(y|\theta)$ can be expressed by
  \begin{align}
    f(y|\theta) =\ & \int_{\cG_\theta(y)}\frac{\rho(u)}{\det\left( \nabla_u G(u, \theta)\nabla_u G(u, \theta)\t \right)^{1/2}}\cr
    &\ \cdot\lambda_{\cG_\theta(y)}(du).
    \label{eq:lik}
  \end{align}
\end{lemma}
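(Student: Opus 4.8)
The plan is to read the right-hand side of (\ref{eq:lik}) as what the smooth co-area formula produces when applied to the map $\Phi_\theta := G(\cdot, \theta):\Upsilon\to\real^n$ at a fixed $\theta$, and then to match it against the defining property of $f(y|\theta)$ as the push-forward density of $\rho$ under $\Phi_\theta$. To begin, I would record that $f(\cdot|\theta)$ is characterized by
\begin{equation*}
  \int_\cY g(y)f(y|\theta)\,dy = \ex\left[g(\Phi_\theta(U))\right] = \int_\Upsilon g(\Phi_\theta(u))\rho(u)\,du
\end{equation*}
for every bounded measurable $g\ge 0$, since $Y = G(U,\theta)$ with $U\sim\rho$. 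By Assumption \ref{as:reg} \romannumeral1) and \romannumeral2), $\Phi_\theta$ is a $C^3$ submersion, so its co-area factor $J_\theta(u) := \det(\nabla_u G(u,\theta)\nabla_u G(u,\theta)\t)^{1/2}$ is strictly positive on $\Upsilon$, every nonempty level set $\Phi_\theta^{-1}(y)$ coincides with $\cG_\theta(y)$ and is a $C^2$ embedded $(m-n)$-submanifold, and the $(m-n)$-dimensional Hausdorff measure on it agrees with the Riemannian measure $\lambda_{\cG_\theta(y)}$.

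Next I would apply the co-area formula (e.g., \citealp{Federer1996}, Section 3.2.12; \citealp{LelievreEtAl2010}, Lemma 3.2) to the nonnegative integrand $u\mapsto g(\Phi_\theta(u))\rho(u)/J_\theta(u)$, obtaining
\begin{equation*}
  \int_\Upsilon g(\Phi_\theta(u))\rho(u)\,du = \int_{\real^n}\left(\int_{\cG_\theta(y)}\frac{g(\Phi_\theta(u))\rho(u)}{J_\theta(u)}\,\lambda_{\cG_\theta(y)}(du)\right)dy.
\end{equation*}
Because $\Phi_\theta(u) = y$ for every $u\in\cG_\theta(y)$, the factor $g(\Phi_\theta(u)) = g(y)$ pulls out of the inner integral. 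Comparing the result with the characterization of $f(\cdot|\theta)$ above and invoking uniqueness of Lebesgue densities (the fundamental lemma of the calculus of variations) yields (\ref{eq:lik}) for Lebesgue-almost every $y$; since both sides are continuous in $y$ on the interior of $\cY$ under Assumption \ref{as:reg}, the identity in fact holds there for every $y$.

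I expect the only real work to be bookkeeping around the co-area formula rather than any substantive difficulty: verifying that $J_\theta$ never vanishes so that the division is legitimate and the level sets are bona fide $(m-n)$-manifolds (this is precisely Assumption \ref{as:reg} \romannumeral2)), confirming that the Hausdorff measure on a fiber agrees with the intrinsic measure $\lambda_{\cG_\theta(y)}$ used elsewhere in the paper, and carrying out the ``almost every $y$'' to ``every $y$'' upgrade by continuity. An alternative, essentially equivalent route would be to disintegrate the push-forward of $\rho$ along $\Phi_\theta$ directly, as in \citet{DiaconisEtAl2013}, but the co-area argument is the most self-contained.
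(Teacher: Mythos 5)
Your proposal is correct and follows essentially the same route as the paper's own proof: identify $G(\cdot,\theta)$ as a submersion so that $\cG_\theta(y)$ is an $(m-n)$-submanifold, then apply the co-area formula to disintegrate the push-forward of $\rho$ along the fibers and read off the density. The paper does this with indicator test sets $B\subseteq\cY$ rather than general test functions $g$, but that is the same argument; your additional bookkeeping (uniqueness of densities, continuity upgrade) is consistent with what the paper leaves implicit.
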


\begin{proof}
For a fixed $\theta$, $G(\cdot, \theta)$ is a submersion by \romannumeral2) of Assumption \ref{as:reg}; therefore, the level set $\cG_\theta(y)$ is a submanifold of $\Upsilon$ of dimension $m - n$ \citep[][Corollary 5.13]{Lee2013}. (\ref{eq:lik}) follows from the co-area formula (e.g., \citealp{DiaconisEtAl2013}, Theorem 2; \citealp{Federer1996}, Section 3.2.12; \citealp{LelievreEtAl2010}, Lemma 3.2):
  \begin{align}
    &\pr\{Y\in B|\theta\} = \pr\{G(U,\theta)\in B|\theta\}\cr
    =\ & \int_{G(U, \theta)\in B} \rho(u)du \cr
    =\ & \int_B \bigg\{\int_{\cG_\theta(y)}\frac{\rho(u)}{\det\left( \nabla_u G(u, \theta)\nabla_u G(u, \theta)\t \right)^{1/2}}\cr
    &\ \cdot\lambda_{\cG_\theta(y)}(du)  \bigg\} dy
    \label{eq:coarea}
  \end{align}
  for any measurable $B\subseteq\cY$.
\end{proof}

\begin{remark}
  The $\theta$-section of the data generating manifold, i.e., $\cG_\theta(y)$, reduces to the single point $\hat u(y, \theta)$ (see Section \ref{ss:afc}) provided $n = m$ and Assumption \ref{as:reg} holds. (\ref{eq:lik}) then becomes
  \begin{align*}
    & f(y|\theta) =\rho(\hat u (y, \theta))\cr
    &\ \cdot\det\left(\nabla_u G(\hat u (y, \theta), \theta)\nabla_u G(\hat u (y, \theta), \theta)\t\right)^{-1/2}.
  \end{align*}
  Note that \citet{HannigEtAl2016} derived the same likelihood representation under slightly weaker differentiability assumptions (i.e., Assumptions A.1--A.4 on pp. 1--2 of their supplementary document).
  \label{rmk:2016}
\end{remark}

We are now ready to associate Bayesian posteriors and GFDs with limits on $\cG(y)$ and $\cU(y)$. We consider Bayesian inference first in the next proposition; the proof can be found in Appendix C of the supplementary document.

\begin{proposition}
  For Bayesian inference, the general notations of Theorem \ref{thm:gen} reduce to $x = (u\t, \theta\t)\t$, $\cX = \Upsilon\times\Theta$, $\cM^\varepsilon = \cG^\varepsilon(y)$, $\cM = \cG(y)$, $a(x) = \pi(\theta)\rho(u)$, and $h(x) = G(u, \theta) - y$.  Under the assumptions of Theorem \ref{thm:gen}, the weak limit of (\ref{eq:abcjoint}) as $\varepsilon\downarrow 0$ has the following absolutely continuous density 
  \begin{align}
    & f_B(u, \theta) \propto \rho(u)\pi(\theta)\cr
    &\ \cdot\det\Big(\nabla G(u, \theta)\nabla G(u, \theta)\t\Big)^{-1/2}
    \label{eq:limitbayes}
  \end{align}
  with respect to $\lambda_{\cG(y)}$, in which 
  \begin{equation*}
    \nabla G(u, \theta) = \left(\nabla_u G(u, \theta): \nabla_\theta G(u, \theta)\right).
  \end{equation*}
  Equivalently, the limit can be characterized by the density
  \begin{align}
  &\tilde f_B(u;y) \propto \frac{\rho(u)\pi(\hat\theta(y, u))}{\det\left(\nabla_\theta G(u, \hat\theta(y, u))\t\nabla_\theta G(u, \hat\theta(y, u))  \right)^{1/2}}\kern-26pt\cr
&\ \cdot\det\Big(\overline{\nabla_\theta G}(u, \hat\theta(y, u))\t
    \nabla_u G(u, \hat\theta(y, u))\cr
    &\ \cdot\nabla_u G(u, \hat\theta(y, u))\t\overline{\nabla_\theta G}(u, \hat\theta(y, u))\Big)^{-1/2}
    \label{eq:limitbayes1}
  \end{align}
with respect to $\lambda_{\cU(y)}$. Moreover, the density of $\hat\theta(y, u)$ under (\ref{eq:limitbayes1}), or equivalently the $\theta$-marginal of (\ref{eq:limitbayes}), is proportional to $\pi(\theta)f(y|\theta)$.
   \label{prop:bayes}
\end{proposition}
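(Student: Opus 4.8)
\textbf{Proof proposal for Proposition \ref{prop:bayes}.}

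The plan is to apply Theorem \ref{thm:gen} with the stated reductions and then unwind the resulting density formula twice: once along the isomorphism of Lemma \ref{lem:iso} to pass from $\lambda_{\cG(y)}$ to $\lambda_{\cU(y)}$, and once by an explicit determinant identity to obtain the $\theta$-marginal. First I would verify that the triple $(h, a, \cM)$ satisfies Assumption \ref{as:gen}. Part \romannumeral1) is immediate from \romannumeral1)--\romannumeral2) of Assumption \ref{as:reg}, since $h(u,\theta) = G(u,\theta) - y$ is three-times (hence twice) continuously differentiable and $\nabla h = \nabla G$ has full row rank $n$ because its sub-block $\nabla_u G$ already does; thus $h$ is a submersion and $\cM = \cG(y)$ is a twice continuously differentiable $(m+q-n)$-submanifold. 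Continuity of $a(x) = \pi(\theta)\rho(u)$ and the support/integrability conditions in \romannumeral2) are assumed as part of ``the assumptions of Theorem \ref{thm:gen}''; tightness \romannumeral3) is likewise imported as a hypothesis. Then Theorem \ref{thm:gen} gives weak convergence of $P_\varepsilon$ — which is exactly the measure with density \eqref{eq:abcjoint} — to $P_0$ with density proportional to $\rho(u)\pi(\theta)\det(\nabla G(u,\theta)\nabla G(u,\theta)\t)^{-1/2}$ with respect to $\lambda_{\cG(y)}$, which is \eqref{eq:limitbayes}.

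Next I would push \eqref{eq:limitbayes} forward through the diffeomorphism $u \mapsto (u\t, \hat\theta(y,u)\t)\t$ from $\cU(y)$ onto $\cG(y)$ supplied by Lemma \ref{lem:iso}. By the change-of-variables relation \eqref{eq:com}, $\lambda_{\cG(y)}(du,d\theta) = D(u,\theta)^{1/2}\lambda_{\cU(y)}(du)$ along the manifold, so the pushed-forward density on $\cU(y)$ is proportional to $\rho(u)\pi(\hat\theta)\det(\nabla G\nabla G\t)^{-1/2}D(u,\hat\theta)^{1/2}$, all evaluated at $\theta = \hat\theta(y,u)$. The task is then purely linear-algebraic: show that
\[
  \det\!\big(\nabla G\,\nabla G\t\big)^{-1/2} D^{1/2}
  = \det\!\big(\nabla_\theta G\t\nabla_\theta G\big)^{-1/2}
    \det\!\big(\overline{\nabla_\theta G}\t\,\nabla_u G\,\nabla_u G\t\,\overline{\nabla_\theta G}\big)^{-1/2}.
\]
Writing $A = \nabla_u G$ ($n\times m$, full row rank), $B = \nabla_\theta G$ ($n\times q$, full column rank), and $\overline B = \overline{\nabla_\theta G}$ ($n\times(n-q)$, orthonormal columns spanning the orthogonal complement of the column space of $B$), one has $\nabla G = (A : B)$ and $\nabla G\nabla G\t = AA\t + BB\t$. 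I would evaluate $\det(AA\t + BB\t)$ by working in the orthonormal basis $[\,\widetilde B \mid \overline B\,]$ of $\real^n$, where $\widetilde B = B(B\t B)^{-1/2}$, so that $BB\t$ becomes block-diagonal with blocks $B\t B$ and $0$; this gives $\det(AA\t+BB\t) = \det\!\big(\widetilde B\t AA\t\widetilde B + B\t B\big)\det\!\big(\overline B\t AA\t\overline B\big) - (\text{cross terms})$, which I would handle with the Schur-complement / matrix-determinant-lemma identity $\det(M + BB\t) = \det(M)\det(\id_q + B\t M^{-1}B)$ applied with $M = AA\t$ (invertible since $A$ has full row rank). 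That yields $\det(\nabla G\nabla G\t) = \det(AA\t)\det(\id_q + B\t(AA\t)^{-1}B)$. Separately, the definition \eqref{eq:Dterm} of $D$ together with the identity $\det(\id_q + S^{-1}) = \det(\id_q + S)/\det(S)$ with $S = B\t(AA\t)^{-1}B$ gives $D = \det(\id_q + S)/\det(S)$, and I would still need to reconcile $\det(AA\t)$ and $\det(S) = \det(B\t(AA\t)^{-1}B)$ against $\det(B\t B)$ and $\det(\overline B\t AA\t\overline B)$; the clean way is the orthogonal-block computation, using $\overline B\overline B\t = \id_n - \widetilde B\widetilde B\t$ and $AA\t = A\overline B\overline B\t A\t + A\widetilde B\widetilde B\t A\t$, to show $\det(AA\t)/\det(S)$ collapses to $\det(\overline B\t AA\t\overline B)\det(B\t B)$. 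Combining the pieces gives exactly \eqref{eq:limitbayes1}.

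Finally, for the $\theta$-marginal, I would integrate \eqref{eq:limitbayes} over the $\theta$-section using the fibered decomposition $\cG(y) = \{(u\t,\theta\t)\t : \theta\in\Theta,\ u\in\cG_\theta(y)\}$. The coarea/Fubini-type disintegration of $\lambda_{\cG(y)}$ over the projection $(u,\theta)\mapsto\theta$ has a Jacobian factor equal to $\det(\overline{\nabla_\theta G}\t\,\nabla_u G\,\nabla_u G\t\,\overline{\nabla_\theta G})^{1/2}\det(\nabla_u G\nabla_u G\t)^{-1/2}$ — this is the standard area factor for the fiber bundle whose fibers $\cG_\theta(y)$ are themselves cut out of $\Upsilon$ by the submersion $G(\cdot,\theta)$ — so that integrating out $u$ against \eqref{eq:limitbayes} produces $\pi(\theta)\int_{\cG_\theta(y)}\rho(u)\det(\nabla_u G\nabla_u G\t)^{-1/2}\lambda_{\cG_\theta(y)}(du)$, which is exactly $\pi(\theta)f(y|\theta)$ by Lemma \ref{lem:lik}. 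The determinant cancellations in the second paragraph will be the main obstacle; once the linear-algebra identity is in hand, the rest is bookkeeping with the co-area formula and the intrinsic-measure relation \eqref{eq:com}, both already available.
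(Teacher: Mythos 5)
Your overall route---apply Theorem \ref{thm:gen} with the stated identifications to obtain (\ref{eq:limitbayes}), push the result through the isomorphism of Lemma \ref{lem:iso} via (\ref{eq:com}) to obtain (\ref{eq:limitbayes1}), and then disintegrate over the $\theta$-fibers and invoke Lemma \ref{lem:lik} for the marginal---is the natural one and matches the paper's strategy, and the target determinant identity you state for the second step is correct: writing $A=\nabla_u G$, $B=\nabla_\theta G$, $M=AA\t$, $S=B\t M^{-1}B$, one has $\det(\nabla G\nabla G\t)=\det(M)\det(\id_q+S)$, $D=\det(\id_q+S)/\det(S)$, and $\det(M)\det(S)=\det(B\t B)\det(\overline{\nabla_\theta G}\t M\,\overline{\nabla_\theta G})$, which combine to give exactly (\ref{eq:limitbayes1}). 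Two slips in your sketch of that identity should be repaired: a $2\times 2$ block determinant does not expand as ``product of diagonal blocks minus cross terms,'' and the quantity that collapses to $\det(B\t B)\det(\overline{\nabla_\theta G}\t M\overline{\nabla_\theta G})$ is the product $\det(M)\det(S)$, not the quotient $\det(M)/\det(S)$; the Schur-complement/block-inverse computation you name does deliver the product version.

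The genuine error is in the marginalization step. The area factor for disintegrating $\lambda_{\cG(y)}$ over the projection $(u\t,\theta\t)\t\mapsto\theta$ is not $\det(\overline{\nabla_\theta G}\t AA\t\overline{\nabla_\theta G})^{1/2}\det(AA\t)^{-1/2}$; that quantity is tied to the constraint cutting $\cU(y)$ out of $\Upsilon$, not to the fibration of $\cG(y)$ over $\Theta$. The correct relation is $\lambda_{\cG(y)}(du,d\theta)=\det(\id_q+S)^{1/2}\,\lambda_{\cG_\theta(y)}(du)\,d\theta$: the coarea Jacobian of the projection restricted to $\cG(y)$ is $\det\big(\id_q-B\t(\nabla G\nabla G\t)^{-1}B\big)^{1/2}=\det(\id_q+S)^{-1/2}$ by the push-through (Woodbury) identity. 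A quick check exposes the discrepancy: for $G(u,\theta)=(u_1+\theta,\,u_2)\t$ with $A=\id_2$ and $B=(1,0)\t$, the correct factor is $\sqrt{2}$ while your formula gives $1$. With the correct factor the cancellation is exact, since $\det(\nabla G\nabla G\t)^{-1/2}\det(\id_q+S)^{1/2}=\det(AA\t)^{-1/2}$, so integrating (\ref{eq:limitbayes}) over the fibers yields $\pi(\theta)\int_{\cG_\theta(y)}\rho(u)\det(\nabla_u G\nabla_u G\t)^{-1/2}\lambda_{\cG_\theta(y)}(du)=\pi(\theta)f(y|\theta)$ by Lemma \ref{lem:lik}; with the factor as you wrote it, the integrand does not reduce to the one in Lemma \ref{lem:lik} and the asserted marginal $\pi(\theta)f(y|\theta)$ does not follow. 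The fix is local---replace the factor and rerun the cancellation---but as written the final step fails.
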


\begin{remark}
  Due to the involvement of a DGE, our geometric characterization of Bayesian inference appears to violate the likelihood principle \cite[e.g.,][Section 1.6.4]{Berger1985}. For instance, the DGE considered in Example \ref{ex:basu} is based on minimal sufficient statistics, which leads to a geometric setup with $m = n = 2$. If we use a DGE corresponding to individual data, such as
  \begin{equation*}
    \begin{bmatrix}
      X_i\\W_i
    \end{bmatrix}
    =\begin{bmatrix}
      1 & 0\\
      \theta & \sqrt{1 - \theta^2}
    \end{bmatrix}\begin{bmatrix}
      U_{i1}\\U_{i2}
    \end{bmatrix},\ i = 1,\dots, N,
  \end{equation*}
  where $U_{i1}, U_{i2}$ are i.i.d. ${\cal N}(0, 1)$ variates, then we have $m = n = 2N$. However, if different DGEs and the corresponding geometric setups yield the same likelihood function via Lemma \ref{lem:lik}, then the last part of Proposition \ref{prop:bayes} guarantees that Bayesian inference made along the $\theta$-marginal should be invariant to choices of DGEs and thus still obeys the likelihood principle.
  \label{eq:likeprin}
\end{remark}

Proposition \ref{prop:gfi} gives a similar characterization for fiducial distributions; the proof can also be found in Appendix C of the supplementary document. 

\begin{proposition}
  \label{prop:gfi}
For GFI, the general notations of Theorem \ref{thm:gen} reduce to $x = u$, $\cX = \Upsilon$, $\cM^\varepsilon = \cU^\varepsilon(y)$, $\cM = \cU(y)$, $a(x) = \rho(u)$, and 
\begin{equation*}
  h(x) = \overline{\nabla_\theta G}(u, \hat\theta(y, u))\t(G(u, \hat\theta(y, u)) - y).
\end{equation*}
Under the assumptions of Theorem \ref{thm:gen}, the weak limit of (\ref{eq:fidu}) as $\varepsilon\downarrow 0$ has the following absolutely continuous density
  \begin{align}
    &\tilde f_F(u;y) \propto \rho(u)\cr
    &\ \cdot\det\Big(\overline{\nabla_\theta G}(u, \hat\theta(y, u))\t\nabla_u G(u, \hat\theta(y, u))\cr
    &\ \cdot\nabla_u G(u, \hat\theta(y, u))\t\overline{\nabla_\theta G}(u, \hat\theta(y, u))\Big)^{-1/2}
    \label{eq:limitfid}
  \end{align}
  with respect to $\lambda_{\cU(y)}$. Equivalently, the limit can be characterized by the density
  \begin{align}
   & f_F(u, \theta) \propto\rho(u)\det\left(\nabla_\theta G(u, \theta)\t\nabla_\theta G(u, \theta)\right)^{1/2}\cr
    &\cdot\det\Big(\nabla G(u, \theta)\nabla G(u, \theta)\t\Big)^{-1/2}
    \label{eq:limitfid1}
  \end{align}
with respect to $\lambda_{\cG(y)}$. Moreover, the density of $\hat\theta(y, u)$ under (\ref{eq:limitfid}), or equivalently the $\theta$-marginal of (\ref{eq:limitfid1}), is proportional to 
  \begin{align}
    &\int_{\cG_\theta(y)}\rho(u)\cdot \det\left( \nabla_\theta G(u, \theta)\t\nabla_\theta G(u, \theta) \right)^{1/2}\cr
    &\ \cdot\det\left(\nabla_u G(u, \theta)\nabla_u G(u, \theta)\t\right)^{-1/2}\lambda_{\cG_\theta(y)}(du).
    \label{eq:margfid}
  \end{align}
\end{proposition}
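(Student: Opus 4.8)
The strategy is to obtain everything from Theorem \ref{thm:gen} applied to the GFI identifications and then reconcile the output with (\ref{eq:limitfid})--(\ref{eq:margfid}) via the isomorphism of Lemma \ref{lem:iso} and two determinant identities. First I would check that the general $\varepsilon$-fattening $\{u\in\Upsilon:\|h(u)\|\le\varepsilon\}$ for $h(u)=\overline{\nabla_\theta G}(u,\hat\theta(y,u))\t(G(u,\hat\theta(y,u))-y)$ is exactly the GFI truncation region $\cU^\varepsilon(y)$. Since $\Theta$ is open and $\hat\theta(y,u)$ is the unique (hence interior) minimizer of $\vartheta\mapsto\|G(u,\vartheta)-y\|^2$, the residual $r(u):=G(u,\hat\theta(y,u))-y$ is orthogonal to the columns of $\nabla_\theta G(u,\hat\theta(y,u))$, so it lies in the column space of $\overline{\nabla_\theta G}(u,\hat\theta(y,u))$; as the latter has orthonormal columns, $r(u)=\overline{\nabla_\theta G}(u,\hat\theta(y,u))\,h(u)$ and $\|h(u)\|=\|r(u)\|=\min_\vartheta\|G(u,\vartheta)-y\|$. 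Hence (\ref{eq:fidu}) is precisely the truncated measure $P_\varepsilon$ of Theorem \ref{thm:gen} with ambient density $a=\rho$, and the theorem delivers a weak limit with density proportional to $\rho(u)\det(\nabla h(u)\nabla h(u)\t)^{-1/2}$ against $\lambda_{\cU(y)}$.

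Next I would compute $\nabla h$ on $\cU(y)$. The simplification that makes this clean is that $r(u)\equiv 0$ there, immediate from $r(u)=\overline{\nabla_\theta G}(u,\hat\theta(y,u))\,h(u)$ and $h(u)=0$. Differentiating $h(u)=\overline{\nabla_\theta G}(u,\hat\theta(y,u))\t r(u)$ by the product rule, the term that differentiates $\overline{\nabla_\theta G}$ is annihilated by the vanishing $r(u)$, and in $\overline{\nabla_\theta G}\t(\nabla_uG+\nabla_\theta G\,\nabla_u\hat\theta)$ the $\nabla_u\hat\theta$ part drops because $\overline{\nabla_\theta G}\t\nabla_\theta G=0$; therefore $\nabla h(u)=\overline{\nabla_\theta G}(u,\hat\theta(y,u))\t\nabla_uG(u,\hat\theta(y,u))$ on $\cU(y)$. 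Plugging this into the Theorem \ref{thm:gen} density yields (\ref{eq:limitfid}). (En route this shows $\nabla h$ has full row rank $n-q$ on $\cU(y)$ because $\nabla_uG$ has full row rank, consistent with Lemma \ref{lem:iso}.)

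To pass to (\ref{eq:limitfid1}) I would transport $\tilde f_F$ to $\cG(y)$ along the isomorphism of Lemma \ref{lem:iso}: by the change of intrinsic measure (\ref{eq:com}), the image measure on $\cG(y)$ has density $\tilde f_F(u;y)\,D(u,\theta)^{-1/2}$ against $\lambda_{\cG(y)}$, with $\theta=\hat\theta(y,u)$. What remains is the linear-algebra identity
\begin{equation*}
 \det\!\big(\nabla G\,\nabla G\t\big)=\det\!\big(\nabla_\theta G\t\nabla_\theta G\big)\,\det\!\big(\overline{\nabla_\theta G}\t\nabla_uG\,\nabla_uG\t\overline{\nabla_\theta G}\big)\,D(u,\theta),
\end{equation*}
which I would prove, writing $A=\nabla_uG$, $C=\nabla_\theta G$, $B=\overline{\nabla_\theta G}$, by adjoining to $B$ an $n\times q$ block $Q$ with orthonormal columns spanning the column space of $C$ (so $(B:Q)$ is orthogonal and $C=QM$ with $\det(M)^2=\det(C\t C)$), and expanding $\det(AA\t+CC\t)$ in this orthonormal basis by the Schur complement of the block $B\t AA\t B$; the block-inverse formula identifies that Schur complement with $[Q\t(AA\t)^{-1}Q]^{-1}$, and the leftover factor reorganizes as $\det(C\t C)\,D(u,\theta)$. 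Substituting the identity into $\tilde f_F(u;y)\,D(u,\theta)^{-1/2}$ collapses it to (\ref{eq:limitfid1}).

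Finally, for the $\theta$-marginal I would apply the coarea formula to the coordinate projection $\pi:\cG(y)\to\Theta$, $(u\t,\theta\t)\t\mapsto\theta$, which by Assumption \ref{as:reg} is a submersion whose fibers are the $\theta$-sections $\cG_\theta(y)$. Computing the adjoint of $D\pi$ on the tangent space $\{(\dot u,\dot\theta):\nabla_uG\,\dot u+\nabla_\theta G\,\dot\theta=0\}$ with the induced Euclidean metric, the lift of $\dot\theta$ orthogonal to the fiber is $(-\nabla_uG\t(\nabla_uG\,\nabla_uG\t)^{-1}\nabla_\theta G\,\dot\theta,\ \dot\theta)$, so the coarea factor at $(u,\theta)$ is $\det(\id_q+\nabla_\theta G\t(\nabla_uG\,\nabla_uG\t)^{-1}\nabla_\theta G)^{-1/2}$. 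Together with the elementary reduction $\det(\nabla G\,\nabla G\t)=\det(\nabla_uG\,\nabla_uG\t)\det(\id_q+\nabla_\theta G\t(\nabla_uG\,\nabla_uG\t)^{-1}\nabla_\theta G)$, the factor $\det(\nabla G\nabla G\t)^{-1/2}$ in $f_F$ cancels the inverse coarea factor, and the fiber integral giving the marginal reduces to $\int_{\cG_\theta(y)}\rho(u)\det(\nabla_\theta G\t\nabla_\theta G)^{1/2}\det(\nabla_uG\,\nabla_uG\t)^{-1/2}\lambda_{\cG_\theta(y)}(du)$, i.e.\ (\ref{eq:margfid}); when $m=n$ the section is the single point $\hat u(y,\theta)$ and this is $f(y|\theta)\det(\nabla_\theta G\t\nabla_\theta G)^{1/2}$, recovering (\ref{eq:fid2016}). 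I expect the two determinant identities and the coarea-factor computation to be the main technical obstacle; the conceptual crux is the vanishing of $r(u)$ on $\cU(y)$, which is what makes $\nabla h$ tractable.
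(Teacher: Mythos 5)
Your proposal is correct and follows essentially the same route as the paper: apply Theorem \ref{thm:gen} on $\Upsilon$ with $\nabla h(u)=\overline{\nabla_\theta G}\t\nabla_u G$ on $\cU(y)$ (the residual's vanishing killing the other product-rule terms) to get (\ref{eq:limitfid}), transfer to $\cG(y)$ via the $D(u,\theta)$ change of measure in Lemma \ref{lem:iso} together with the determinant factorization $\det(\nabla G\nabla G\t)=\det(\nabla_\theta G\t\nabla_\theta G)\det(\overline{\nabla_\theta G}\t\nabla_u G\nabla_u G\t\overline{\nabla_\theta G})\,D(u,\theta)$ to get (\ref{eq:limitfid1}), and then use the coarea formula for the $\theta$-projection, whose Jacobian factor cancels against $\det(\id_q+\nabla_\theta G\t(\nabla_u G\nabla_u G\t)^{-1}\nabla_\theta G)^{-1/2}$ from (\ref{eq:Jtermdenom}), to obtain (\ref{eq:margfid}). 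The identification $\|h(u)\|=\min_\vartheta\|G(u,\vartheta)-y\|$ via the first-order orthogonality of the residual, and the coarea-factor computation, are handled exactly as in the paper's argument, so no gap remains.
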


\begin{remark}
  By the Matrix Determinant Lemma, the second determinant on the right-hand side of (\ref{eq:limitfid1}) can be alternatively expressed as
  \begin{align}
    & \det\left(\nabla_u G(u, \theta)\nabla_u G(u, \theta)\t \right)^{1/2}\cr
    &\ \cdot \det\Big(\id_q + \nabla_\theta G(u, \theta)\t\cr
    &\ \cdot \left[\nabla_u G(u, \theta)\nabla_u G(u, \theta)\t\right]^{-1} \nabla_\theta G(u, \theta)\Big)^{1/2}.
    \label{eq:Jtermdenom}
  \end{align}
  (\ref{eq:Jtermdenom}) is often computationally more efficient for the reasons that $\nabla_u G(u, \theta)\nabla_u G(u, \theta)\t$ can be highly sparse and structured (e.g., block diagonal in the repeated measures ANOVA example; see Section \ref{s:ex}), and that the second determinant is computed with a small $q\times q$ matrix.
  \label{rmk:comp}
\end{remark}

Propositions \ref{prop:bayes} and \ref{prop:gfi} expose crucial disparities between Bayesian inference and GFI, which we now discuss.

\subsubsection*{Question 1: Can We Express a GFD as a Bayesian Posterior?}

The answer is negative in general. By Theorem \ref{thm:gen}, (\ref{eq:limitfid1}) can be thought as restricting the ambient density
  \begin{equation}
    a_F(u, \theta) = \rho(u)\det\left(\nabla_\theta G(u, \theta)\t\nabla_\theta G(u, \theta)\right)^{1/2}
    \label{eq:afid}
  \end{equation}
  to the data generating manifold, whereas Bayesian inference concerns restricting $a_B(u, \theta) = \rho(u)\pi(\theta)$ to the same manifold. The determinant term in (\ref{eq:afid}) may depend on both $u$ and $\theta$ and thus does not reduce to a prior density $\pi(\theta)$ in general. Nevertheless, if the prior is allowed to be data dependent (i.e., extending $\pi(\theta)$ to $\pi(y, \theta)$) and $\cG_\theta(y)$ is singleton (i.e., $m = n$), then a GFD can be interpreted as a posterior. In this special case, $u\in\cG_\theta(y)$ if and only if $u = \hat u(y, \theta)$, and the dependency on $u$ can therefore be removed from the determinant term in (\ref{eq:afid}) (see Remark \ref{rmk:2016}). The conclusion that GFD is typically not a Bayesian posterior can also be deduced from the $\theta$-marginal density (\ref{eq:margfid}), which cannot be factorized into the product of the likelihood function (\ref{eq:lik}) and a function of $\theta$ in general. In the special case when $\cG_\theta(y)$ is singleton, (\ref{eq:margfid}) reduces to the formula presented in Theorem 1 of \cite{HannigEtAl2016} and is subject to a data-dependent Bayesian interpretation.

\subsubsection*{Question 2: Is There a Unique Justification for the Objectivity of GFI?}

A new qualification of objective post-data inference is manifested by the limiting density on $\cU(y)$. Contrasting (\ref{eq:limitbayes1}) with (\ref{eq:limitfid}), we observe that the ambient densities (on $\Upsilon$, in the sense of Theorem \ref{thm:gen}) for Bayesian inference and GFI are
\begin{equation}
  \tilde a_B(u;y) = \frac{\rho(u)\pi(\hat\theta(y, u))}{\det\left(\nabla_\theta G(u, \hat\theta(y, u))\t\nabla_\theta G(u, \hat\theta(y, u))  \right)^{1/2}}\kern-26pt
  \label{eq:atbayes}
\end{equation}
and $\tilde a_F(u) = \rho(u)$, respectively. Hence, GFI is objective in the sense that it ``continue[s] to regard'' \citep[][p. 885]{Dempster1963} the random components $U$'s distribution as the ambient distribution on $\Upsilon$. This feature leaves the data generation process intact and demands no extra information. To the contrary, the ambient density (\ref{eq:atbayes}) corresponding to Bayesian inference is adapted using prior information encoded in $\pi$ as well as the observed data $y$.

\subsubsection*{Question 3: Are Bayesian Inference and GFI Invariant to Transformations of the Data Space?}
It is known that Bayesian inference is invariant to nonlinear transformations of the DGE whereas GFI is generally not. Consider a differentiable transform $\varphi:\real^n\to\real^n$ such that the $n\times n$ Jacobian matrix $\nabla\varphi$ always has full rank. Compose $\varphi$ with $G$ to form a transformed DGE, i.e., $\varphi\circ G$. Because $G(u, \theta) = y$ for all $(u\t, \theta\t)\t\in\cG(y)$, we have
\begin{align*}
  &\det\left(\nabla(\varphi\circ G)(u, \theta)\nabla(\varphi\circ G)(u, \theta)\t\right)\cr
  =\ & \det\left(\nabla\varphi(y)\right)^2\det\left(\nabla G(u, \theta)\nabla G(u, \theta)\t\right),
\end{align*}
which is proportional to $\det\left(\nabla G(u, \theta)\nabla G(u, \theta)\t\right)$. Therefore, (\ref{eq:limitbayes}) for Bayesian inference remains unchanged when the transformed DGE $\varphi\circ G$ is used in place of $G$. Meanwhile, 
\begin{align}
  &\det\left(\nabla_\theta(\varphi\circ G)(u, \theta)\t\nabla_\theta(\varphi\circ G)(u, \theta)\right)\cr
  =\ & \det\left(\nabla_\theta G(u, \theta)\t\nabla\varphi(y)\t\nabla\varphi(y)\nabla_\theta G(u, \theta)\right).
  \label{eq:detfid}
\end{align}
The right-hand side of (\ref{eq:detfid}) is not proportional to\break $\det\left( \nabla_\theta G(u, \theta)\t \nabla_\theta G(u, \theta)\right)$ in general. There are two notable exceptions: when $n = q$ so that $\nabla_\theta G(u, \theta)$ is square and when $\nabla\varphi(y)\t\nabla\varphi(y)\equiv c\,\id_n$ with a positive constant $c$. Consequently, (\ref{eq:limitfid1}) for GFI is typically not invariant under the transformed DGE. Applying (\ref*{eq:detnabh}) from the supplementary document, the same conclusion can be drawn for the limiting densities on $\cU(y)$, i.e., (\ref{eq:limitbayes1}) and (\ref{eq:limitfid}).

\addtocounter{example}{-1}
\begin{example}[continued]
  Using (\ref{eq:basudge}) as the DGE for the bivariate Gaussian problem, we can express (\ref{eq:afid}), i.e., GFI's ambient density on $\cG(y)$, as
  \begin{equation}
    a_F(u, \theta)\equiv a_F(u) = \rho(u)\sqrt{u_1^2 + u_2^2},
    \label{eq:afidbasu}
  \end{equation}
  in which $\rho$ is the joint density of two i.i.d. $\chi^2_N/N$ variates. Note that $m = n = 2$ in this example, and that $\hat u(y, \theta) = (y_1/(1 + \theta), y_2/(1 - \theta) )\t$. Therefore, the GFD coincides with the Bayesian posterior resulted from the data-dependent prior
    \begin{equation}
      \pi(y, \theta) \propto\sqrt{\frac{y_1^2}{(1 + \theta)^2} + \frac{y_2^2}{(1 - \theta)^2}}
    \label{eq:ddpbasu}
    \end{equation}

    Next, consider two prior distributions: the flat prior $\pi^{(1)}(\theta)\equiv 1/2$ and the Jeffreys prior $\pi^{(2)}(\theta)= \sqrt{1+\theta^2}/(1 - \theta^2)$. On the one hand, (\ref{eq:atbayes}) for the flat prior is simplified to
  \begin{equation}
    \tilde a_B^{(1)}(u; y)\equiv\tilde a_B^{(1)}(u) = \frac{\rho(u)}{2\sqrt{u_1^2 + u_2^2}},
    \label{eq:atb1basu}
  \end{equation}
  which does not depend on $y$. Whenever $N > 2$, (\ref{eq:atb1basu}) is bounded and integrable; it can thus be normalized to a proper density on $(0, \infty)^2$. The contours of $\tilde a_B^{(1)}(u)$ and $\tilde a_F(u) = \rho(u)$ are contrasted in the middle panel of Figure \ref{fig:basu}.  On the other hand, (\ref{eq:atbayes}) for the Jeffreys prior becomes 
  \begin{align}
    & \tilde a_B^{(2)}(u; y) = \rho(u)\sqrt{\frac{(u_1^2 + u_2^2)}{2}}\bigg[ \frac{1}{(2u_2^2 + u_1y_1 - u_2y_2)^2}\kern-16pt\cr
    &\ + \frac{1}{(2u_1^2 - u_1y_1 + u_2y_2)^2}\bigg]^{1/2}.
    \label{eq:atb2basu}
  \end{align}
  Unlike $\tilde a_F(u)$ and $\tilde a_B^{(1)}(u)$, (\ref{eq:atb2basu}) is data dependent and unbounded on $(0, \infty)^2$ for all $N$: It diverges to infinity as $2u_2^2 + u_1y_1 - u_2y_2$ or $2u_1^2 - u_1y_1 + u_2y_2$ approaches zero.

  Finally, consider the reciprocal transformation $\varphi:(0, \infty)^2\to(0, \infty)^2$, $\varphi(y)\mapsto (1/y_1, 1/y_2)\t$. Using the transformed DGE
  \begin{equation}
    (\varphi\circ G)(u, \theta) = \left( \frac{1}{(1 + \theta)u_1}, \frac{1}{(1 - \theta)u_2} \right)\t
    \label{eq:basudgea}
  \end{equation}
for GFI, we arrive at an ambient density on $\cG(y)$ that is different from (\ref{eq:afidbasu}):
  \begin{equation}
    a_F^{(\varphi)}(u, \theta) = \rho(u)\sqrt{\frac{1}{u_1^2(1 + \theta)^4} + \frac{1}{u_2^2(1 - \theta)^4}}.
    \label{eq:afidbasua}
  \end{equation}
  The data-dependent prior derived based on $\varphi\circ G$ is also different from (\ref{eq:ddpbasu}):
    \begin{equation}
      \pi^{(\varphi)}(y, \theta) \propto\sqrt{\frac{1}{y_1^2(1 + \theta)^2} + \frac{1}{y_2^2(1 - \theta)^2}}.
    \label{eq:ddpbasua}
    \end{equation}
    The corresponding $\theta$-marginals of GFDs (using the original $G$ versus the transformed $\varphi\circ G$) and Bayesian posteriors (using the flat prior versus the Jeffreys prior) are contrasted in the right panel of Figure \ref{fig:basu}.
  
\end{example}

\section{Review of Markov Chain Monte Carlo Sampling on Manifolds}
\label{s:mcmc}
Monte Carlo approximations to a fiducial or a Bayesian posterior distribution---when viewed as an absolutely continuous distribution defined on a smooth manifold---can be constructed via manifold MCMC sampling. In this section, we review a manifold random-walk Metropolis (RWM) algorithm proposed by \citet{ZappaEtAl2018}. We focus on a specific Gaussian proposal that corresponds to a one-step discretization of the constrained overdamped Langevin process \citep[][Section 3.3]{LelievreEtAl2012}. For generality, we adopt the notation of Theorem \ref{thm:gen} in the current section. The algorithm is presented assuming that $\cM$ is unbounded, though incorporating additional inequality constraints is straightforward \citep[see, e.g., Remark 6 of][]{LelievreEtAl2019}.

\begin{algorithm}[!t]
  \caption{Manifold Random-Walk Metropolis Update}
  \label{alg:rwm}
  \begin{algorithmic}[1]
    \Require Initial value $x\in\cM$, target density $f:\real^d\to\real$, proposal parameters $\mu(x)\in\real^{d - n}$, $\Sigma(x)\in\real^{(d - n)\times(d - n)}_+$, and $\delta > 0$, tuning parameters $\gamma$ and $R$ for {\tt Project} (Algorithm \ref{alg:ret})
    \State Sample $z$ from ${\cal N}(\mu(x), \delta\Sigma(x))$ and set $w = \overline{\nabla h}(x)z$\label{ln:tang}
    \State Propose $x' = \hbox{\tt Project}(x + w, \nabla h(x), \gamma, R)$\label{ln:proj}
    \If{fail to find $x'$}\label{ln:check1a}
      \State\Return $x$
      \EndIf \label{ln:check1b}
      \State Compute $z' = \overline{\nabla h}(x')\t(x - x')$ and $w' = \overline{\nabla h}(x')z'$ \label{ln:check2a}
      \State Set $x'' = \hbox{\tt Project}(x' + w', \nabla h(x'), \gamma, R)$\label{ln:rproj}
    \If{fail to find $x''$ or $x''\neq x$}
      \State\Return $x$
    \EndIf\label{ln:check2b}
    \State Compute\label{ln:check3a} 
    \begin{equation}
      \alpha(x; x') = \min\left\{1, \frac{f(x')\phi(z'; \mu(x'), \delta\Sigma(x'))}{f(x)\phi(z; \mu(x), \delta\Sigma(x))}\right\}
      \label{eq:mh}
    \end{equation}
    where $\phi(\cdot; \mu, \Sigma)$ denotes the density of ${\cal N}(\mu, \Sigma)$
    \State Sample $u\in(0, 1)$ from $\hbox{Unif}(0, 1)$
    \If{$u\le\alpha(x; x')$}
      \State\Return $x'$
      \Else
      \State\Return $x$
    \EndIf\label{ln:check3b}
  \end{algorithmic}
\end{algorithm}

\begin{algorithm}[!t]
  \caption{\texttt{Project} to Manifold $\cM$ along $B$}
  \label{alg:ret}
  \begin{algorithmic}[1]
    \Require Initial location $x_0\in\real^d$, full-rank basis matrix $B\in\real_+^{d\times n}$, convergence tolerance $\gamma > 0$, maximum number of iterations $R$
    \State Set $a_0 = 0_n$, where $0_n$ is a $n\times 1$ vector of zeros
    \For{$r = 0, \dots, R - 1$}
      \If{$\|h(x_r)\| \le \gamma$}
        \State \Return $x_r$
      \EndIf
      \State Update $a_{r + 1} = a_r - \left[ \nabla h(x_r + Ba_r)\t B \right]^{-1}h(x + Ba_r)$
      \State Compute $x_{r + 1} = x_0 + Ba_r$
    \EndFor
    \State Throw an error
  \end{algorithmic}
\end{algorithm}

\subsection{Manifold Random-Walk Metropolis}
\label{ss:rwm}
The pseudocode for a single manifold RWM update is summarized in Algorithm \ref{alg:rwm}. With a slight abuse of notation, $f$ in the pseudocode denotes a smooth extension of the target density $f$ (with respect to $\lambda_\cM$) to the ambient space $\real^d$. Given an initial value $x$ on the manifold $\cM$, a proposal $x'$ is generated from a random walk on the tangent space at $x$ (Line \ref{ln:tang}), followed by a projection back to the manifold along the normal direction (Line \ref{ln:proj}).\footnote{We follow \citet[][p. 2610]{ZappaEtAl2018} to call the operation a ``projection''; however, it is different from an orthogonal projection to the manifold.} Let $T_x\cM = \{w\in\real^d: \nabla h(x)w = 0\}$ be the tangent space of $\cM$ at $x$, and $\overline{\nabla h}(x)\in\real^{d\times(d - n)}$ be an orthogonal complement of $\nabla h(x)\t\in\real^{d\times n}$ with orthonormal columns. $\overline{\nabla h}$ forms a basis for $T_x\cM$. The random-walk step entails generating $w = \overline{\nabla h}(x)z\in T_x\cM$, in which $z$ follows ${\cal N}(\mu(x), \delta\Sigma(x))$, $\mu(x)\in\real^{d - n}$, $\Sigma(x)\in\real^{(d - n)\times(d - n)}_+$ is positive definite, and $\delta > 0$ is the proposal scale parameter. The point $x + w$ resulted from the random walk needs to be retracted back to $\cM$ to yield a valid proposal. In particular, we find a coefficient vector $a\in\real^n$ that solves $h(x + z + \nabla h(x)a) = 0$. Because the constraint function $h$ is generally nonlinear, we follow \citet{ZappaEtAl2018} to apply a standard Newton solver. This projection step is abbreviated as \texttt{Project} in the pseudocode: The four arguments required by the function call of \texttt{Project} are described in the input line of Algorithm \ref{alg:ret}. 

The proposal $x'$ is not accepted unless it passes all the following three checks. First, it is possible that the function \texttt{Project} throws an error---or equivalently, the Newton solver fails to converge (see Lines \ref{ln:check1a}--\ref{ln:check1b} of Algorithm \ref{alg:rwm}); if so, we have to revert to the original $x$ and proceed to the next cycle. Second, we need to confirm that a reverse move starting from $x'$ recovers the original point $x$ (Lines \ref{ln:check2a}--\ref{ln:check2b}); a graphical illustration for the potential failure of such a reversal move can be found in Figure 2 of \citet{LelievreEtAl2019}. Finally, a standard Metropolis-Hastings step is performed (Lines \ref{ln:check3a}--\ref{ln:check3b}), in which the acceptance ratio is given by (\ref{eq:mh}). It was shown in \citet{ZappaEtAl2018} that the above RWM update satisfies the detailed balance condition when the equations were solved exactly in the retraction steps (Lines \ref{ln:proj} and \ref{ln:rproj}). When a numerical solver is employed, which is typically the case in practice, the manifold RWM algorithm can be understood as a noisy MCMC method \citep{AlquierEtAl2016}.

\subsection{Proposal Distribution}
\label{ss:prop}
We found in pilot experiments that a Gaussian proposal (Line \ref{ln:tang}) with
\begin{equation}
  \mu(x)\t = \frac{\delta^2}{2}\nabla\log f(x)\overline{\nabla h}(x)
  \label{eq:mu}
\end{equation}
and $\Sigma(x)\equiv \id_{(d - n)\times (d - n)}$ fares efficient even when the dimension of the manifold (i.e., $d - n$) is high. The corresponding manifold RWM update yields an Euler discretization of the constrained overdamped Langevin diffusion \citep[with an identity mass matrix;][Proposition 3.6]{LelievreEtAl2012}, which is also equivalent to a single update of ``position'' \citep[][p. 383]{LelievreEtAl2019} while simulating the constrained Hamilton dynamics via the RATTLE discretization.\footnote{One can extend Algorithm \ref{alg:rwm} to a manifold Hamiltonian Monte Carlo sampler by repeatedly executing Lines \ref{ln:tang}--\ref{ln:check2b} with a small, fixed ``timestep'' $\delta$ \citep[][p. 380]{LelievreEtAl2019}.} In case the gradient of the log density is challenging to evaluate (e.g., the gradient of the log fiducial density (\ref{eq:limitfid1}) involves the second derivatives of the DGE), we may substitute $\nabla\log f(x)$ in (\ref{eq:mu}) by a numerical estimate. In fact,
\begin{align}
  & \nabla\log f(x)\overline{\nabla h}(x)\cr
  =\ & \nabla_t\log f(x + \overline{\nabla h}(x)\t t)\big|_{t = 0_{n - d}}.
  \label{eq:derivtrick}
\end{align}
Compared to differentiating $\log f$ with respect to $x\in\real^d$, the right-hand side derivative in (\ref{eq:derivtrick}) is taken with respect to the lower-dimensional $t\in\real^{d - n}$ and thus can be more economical to numerically approximate.

\section{Example: GFI for Repeated Measures ANOVA}
\label{s:ex}

Next, we apply our main result and sampling strategy to perform GFI in a repeated measures ANOVA example. GFI for Gaussian linear mixed-effects models has been studied by \citet{CisewskiHannig2012}; however, their development is confined to the $\varepsilon$-fatting, i.e., (\ref{eq:fidu}), with a positive tolerance $\varepsilon$, and the proposed sequential Monte Carlo algorithm suffers from numerical degeneracy when $\varepsilon$ is small. From the new geometric perspective, we can not only express the exact fiducial density (i.e., the weak limit as $\varepsilon\downarrow 0$) but also generate fiducial samples conveniently using manifold MCMC algorithms. Meanwhile, Bayesian inference for linear mixed-effects models has been extensively studied and widely applied for decades \citep[e.g.,][Chapter 15, and the bibliographic note therein]{GelmanEtAl2013}. As a Bayesian benchmark for the empirical data example (Section \ref{ss:emp}), we consider a weakly informative prior configuration suggested by \citet{Gelman2006} and approximate the resulting posterior via Gibbs sampling with data augmentation \citep{TannerWong1987}.

\subsection{Model}
\label{ss:fa}
In a within-subject design, let $X_{ij}$ denote the observed response of subject $j$ in condition $i$, where $i = 1,\dots, I$ and $j = 1, \dots, J$ with $I, J > 1$. Repeated measures ANOVA decomposes each response entry $X_{ij}$ into the sum of the treatment mean $\mu_i$, subject effect $\sigma_z Z_j$, and the interaction effect $\sigma_e E_{ij}$:
\begin{equation}
  X_{ij} = \mu_i + \sigma_z Z_j + \sigma_e E_{ij},
  \label{eq:rmanova}
\end{equation}
in which $Z_j$ and $E_{ij}$ are continuous random variables with known distributions, and $\sigma_z, \sigma_e > 0$ are the respective scale parameters. (\ref{eq:rmanova}) amounts to the component-wise expression of the DGE. To be consistent with our generic notation, identify 
\begin{align}
  &Y = \vec(X),\
  U = (Z\t, \vec(E)\t)\t,\cr
  &\theta = (\mu\t, \sigma_z, \sigma_e)\t,
  \label{eq:elem}
\end{align}
in which $X = \{X_{ij}\}$, $Z=\{Z_j\}$, $E=\{E_{ij}\}$, and $\mu = \{\mu_i\}$. The dimensions of $Y$, $U$, and $\theta$ are $n = IJ$, $m = IJ + J$, and $q = I + 2$, respectively.

In the next proposition, we verify the crucial tightness assumption that allows us to apply Proposition \ref{prop:gfi}. The proof can be found in Appendix D in the supplementary document.

\begin{proposition}
  Under a repeated measures ANOVA model, suppose that $\vec(E)$ and $Z$ are independent, spherically distributed random vectors on $\real^{IJ}$ and $\real^J$, respectively. Then the collection of probability measures (\ref{eq:fidu}) indexed by $\varepsilon$ is tight.
 \label{prop:rmanova}
\end{proposition}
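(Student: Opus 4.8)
The goal is to show that the family $\{\psi_\varepsilon\}$ in (\ref{eq:fidu}) is tight, i.e., that for every $\eta>0$ there is a compact $K\subseteq\Upsilon$ with $\psi_\varepsilon(K)\ge 1-\eta$ uniformly in small $\varepsilon$. Since $\Upsilon=\real^J\times\real^{IJ}$ for the ANOVA DGE, it suffices to control the tails in $u=(z\t,\vec(e)\t)\t$. The plan is to obtain a \emph{uniform-in-$\varepsilon$ tail bound} of the form $\psi_\varepsilon(\|u\|>T)\le C(T)$ with $C(T)\to 0$, by bounding the numerator mass on $\{\|u\|>T\}\cap\cU^\varepsilon(y)$ relative to a fixed lower bound on the denominator $\int_{\cU^\varepsilon(y)}\rho(u)\,du$. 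Because $\psi_\varepsilon(u)\propto\rho(u)\ind_{\cU^\varepsilon(y)}(u)$ is just $\rho$ restricted to $\cU^\varepsilon(y)$, the task reduces to: (i) $\int_{\cU^\varepsilon(y)}\rho\,du$ is bounded below by a positive constant for all small $\varepsilon$; (ii) $\int_{\cU^\varepsilon(y)\cap\{\|u\|>T\}}\rho\,du\to 0$ as $T\to\infty$, uniformly in small $\varepsilon$.

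For (i), I would note that $\cU^\varepsilon(y)\supseteq\cU^{\varepsilon'}(y)$ for $\varepsilon\ge\varepsilon'$ and that $\cU(y)$ has positive $\lambda_{\cU(y)}$-measure in $\supp(\rho)$ (Assumption \ref{as:reg} plus Lemma \ref{lem:iso}), so by monotone convergence $\int_{\cU^\varepsilon(y)}\rho\,du$ decreases to $\int_{\cU(y)}\rho\,du$... but that set is Lebesgue-null in $\real^m$, so this naive bound is $0$ and useless. Instead I would fix a small $\varepsilon_0>0$ and use $\int_{\cU^\varepsilon(y)}\rho\,du\ge\int_{\cU^{\varepsilon_0}(y)}\rho\,du>0$ for $\varepsilon\le\varepsilon_0$? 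No — the inclusion goes the other way: $\cU^\varepsilon(y)\subseteq\cU^{\varepsilon_0}(y)$ for $\varepsilon\le\varepsilon_0$, which bounds the denominator \emph{above}. The correct route is to show the denominator $\int_{\cU^\varepsilon(y)}\rho\,du$ behaves like $c\,\varepsilon^{n-q}$ (the codimension of $\cU(y)$ in $\Upsilon$ is $n-q$) with $c>0$; this follows from the tube/co-area estimate underlying Theorem \ref{thm:gen}, since $h(u)=\overline{\nabla_\theta G}(u,\hat\theta(y,u))\t(G(u,\hat\theta(y,u))-y)$ is a submersion with $\|h\|\le\varepsilon$ defining $\cU^\varepsilon(y)$ (Proposition \ref{prop:gfi}), plus $\lambda_{\cU(y)}(\supp\rho\cap\cU(y))>0$. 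So the denominator is $\asymp\varepsilon^{n-q}$.

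For (ii) — the main obstacle — I must show the $\rho$-mass of the far-away part of the \emph{thin tube} $\cU^\varepsilon(y)$ is $o(\varepsilon^{n-q})$ as $T\to\infty$, uniformly. Here the spherical symmetry of $Z$ and $\vec(E)$ is exploited. Writing $u=r\omega$ in polar-type coordinates split across the $Z$- and $E$-blocks and using that $\cG_\theta(y)$ via the DGE (\ref{eq:rmanova}) forces $\sigma_z Z_j+\sigma_e E_{ij}=X_{ij}-\mu_i$, one sees that along $\cU^\varepsilon(y)$ a point with large $\|u\|$ must have $(\sigma_z,\sigma_e)=\hat\theta$'s scale components large (since the imputed residuals must reproduce the fixed $y$), which is where the radial density $\rho$ of the $\chi^2$-type / spherical law decays. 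Concretely I would: (a) parametrize $\cU^\varepsilon(y)$ over the $(m-n)$-dimensional $\theta$-section directions times the $q$ transverse-to-$\cU(y)$ directions of size $\varepsilon$; (b) bound $\rho$ on the slice $\{\|u\|>T\}$ by its radial profile and integrate, getting a bound $\varepsilon^{n-q}\cdot g(T)$ with $g(T)\to 0$ by integrability of the spherical density's tail and the fact that the cross-sectional volume grows only polynomially; (c) divide by the $\asymp\varepsilon^{n-q}$ denominator. The delicate point is controlling the Jacobian $D(u,\theta)^{\pm1/2}$ (cf. (\ref{eq:Dterm})) and the map $u\mapsto\hat\theta(y,u)$ uniformly on the unbounded tube — I expect one needs an explicit handle on $\hat\theta(y,u)$ for the ANOVA model (it is essentially a closed-form least-squares / moment estimator in $(\mu,\sigma_z^2,\sigma_e^2)$), which makes the growth rates of all Jacobian factors explicit and polynomial, so that the exponential/$\chi^2$ tail decay of $\rho$ dominates. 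Assembling (i) and (ii) gives $\psi_\varepsilon(\{\|u\|>T\})\le g(T)/c + o(1)\to 0$, hence tightness.
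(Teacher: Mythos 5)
There is a genuine gap at the central step of your argument, and it is exactly the point the spherical hypothesis is there to handle. The measures \eqref{eq:fidu} are nothing more than $\rho$ restricted to $\cU^\varepsilon(y)=\{u:\min_{\vartheta}\|G(u,\vartheta)-y\|\le\varepsilon\}$, and for the ANOVA DGE, $G(u,\theta)=(1_J\otimes\id_I)\mu+\sigma_z\,(z\otimes 1_I)+\sigma_e\,\vec(e)$ is \emph{linear} in $(\mu,\sigma_z,\sigma_e)$ with $\sigma_z,\sigma_e$ ranging over $(0,\infty)$. Hence membership of $u=(z\t,\vec(e)\t)\t$ in $\cU^\varepsilon(y)$ is invariant under separate positive rescalings of $z$ and of $\vec(e)$ (a rescaling of $z$ by $c>0$ is absorbed by $\sigma_z\mapsto\sigma_z/c$, and similarly for $e$): $\cU^\varepsilon(y)$ is a cone depending on $u$ only through the directions $z/\|z\|$ and $\vec(e)/\|\vec(e)\|$. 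Combining this with the factorization $S=RV$ of a spherical law (Remark \ref{rmk:sph}), the truncation in \eqref{eq:fidu} acts only on the directional components $V$, which live on compact spheres, while the radii $R$ retain exactly their original (fixed, $\varepsilon$-free) distributions and are independent of the event of acceptance. Tightness is then immediate: pick $T$ with $\pr\{R_z\le T,\,R_e\le T\}\ge 1-\eta$ and the same compact set works for every $\varepsilon$. This scale-invariance plus radial--angular independence is the essential idea (it is also why the paper works with a design matrix $W(z)$ and coefficient vector containing the fixed effects and scale parameters), and it is missing from your proposal.

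Because you do not use this structure, your step (ii) does not go through under the stated hypotheses. First, the proposition assumes only sphericity; multivariate $t$ (even Cauchy-type) radial laws are allowed, so the ``exponential/$\chi^2$ tail decay of $\rho$ dominates polynomial growth'' device is unavailable, and mere integrability of the radial tail does not beat a transverse factor growing like a power of $\|u\|$. Second, the geometry is misdescribed: by the cone property the $\varepsilon$-fattening is not a tube of uniform thickness $\varepsilon$; its Euclidean transverse thickness grows linearly in $\|u\|$, so the far-away mass is of order $\varepsilon^{\,n-q}$ times a purely radial tail probability only because of the exact angular--radial cancellation you omit. Third, your mechanism is backwards: on $\cU^\varepsilon(y)$ a large $\|\vec(e)\|$ (or $\|z\|$) forces the fitted scale $\hat\sigma_e$ (or $\hat\sigma_z$) in \eqref{eq:thetahat} to be \emph{small}, not large, so no help comes from where ``$\hat\theta$'s scale components are large.'' Finally, the Jacobian factors $D(u,\theta)$ and $\hat\theta(y,u)$-regularity you flag as the delicate point are irrelevant to tightness: \eqref{eq:fidu} contains no Jacobian, and those determinants enter only the limiting density, not Assumption \ref{as:gen}\,\romannumeral3). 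Your part (i) (a local tube lower bound of order $\varepsilon^{\,n-q}$ for the normalizing constant) is fine, but the matching uniform upper bound on the tails requires the cone/spherical argument above rather than tail-decay comparisons.
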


\begin{remark}
  The additional distributional assumption for $\vec(E)$ and $Z$ is made for ease of theoretical justification. A spherically distributed variate $S$ is subject to a unique factorization $S = RV$, where $V$ is uniform on the unit sphere and $R$ is a positive, continuous random variable \citep{FangEtAl1990}. Common examples of spherical distributions are multivariate Gaussian and $t$ distributions, which are popular choices of error distributions for linear models \citep{FraserNg1980}. 
  \label{rmk:sph}
\end{remark}

\begin{remark}
  The proof of Proposition \ref{prop:rmanova} in Appendix D in the supplementary document can be adapted to handle unbalanced designs (i.e., $i = 1, \dots, I_j$ where $I_j$'s may not be identical for different $j$'s) or even more general linear mixed-effects models considered by \citet{CisewskiHannig2012}. We only need to modify the definition of $\beta$ that appears in (\ref*{eq:rmanova1}) from the supplementary document to include all fixed effects and scale parameters for random effects, and correspondingly the definition of $W(z)$.
  \label{rmk:glmm}
\end{remark}

Pointwise evaluation of the fiducial density (\ref{eq:limitfid1}) requires formulas for the Jacobian matrices. Under the repeated measures ANOVA model, $\nabla_u G(u, \theta)$ and $\nabla_\theta G(u, \theta)$ have blocked matrix representations corresponding to the partitions of $U$ and $\theta$ in (\ref{eq:elem}):
\begin{align}
  &\nabla_u G(u, \theta) = (\id_J\otimes\sigma_z 1_I\ :\ 
  \sigma_e\,\id_{IJ}),\cr
  &\nabla_\theta G(u, \theta) = (1_J\otimes \id_I\ :\ Z\otimes 1_I\
  :\ \vec(E)).
  \label{eq:dGdx}
\end{align}
Note that the dimensions of $\nabla_u G$ and $\nabla_\theta G$ are $IJ\times (IJ + J)$ and $IJ\times (I + 2)$. Na\"ively evaluating the fiducial density and applying the manifold MCMC update incur matrix operations up to $O(I^3J^3)$ complexity, which can be prohibitively expensive when $I$ or $J$ is large. In Appendix E of the supplementary document, we demonstrate that the computational cost can be reduced to $O(I^3J)$ thanks to the specific structure of (\ref{eq:dGdx}).

\begin{figure*}[!t]
  \centering
  \includegraphics[width=0.9\hsize]{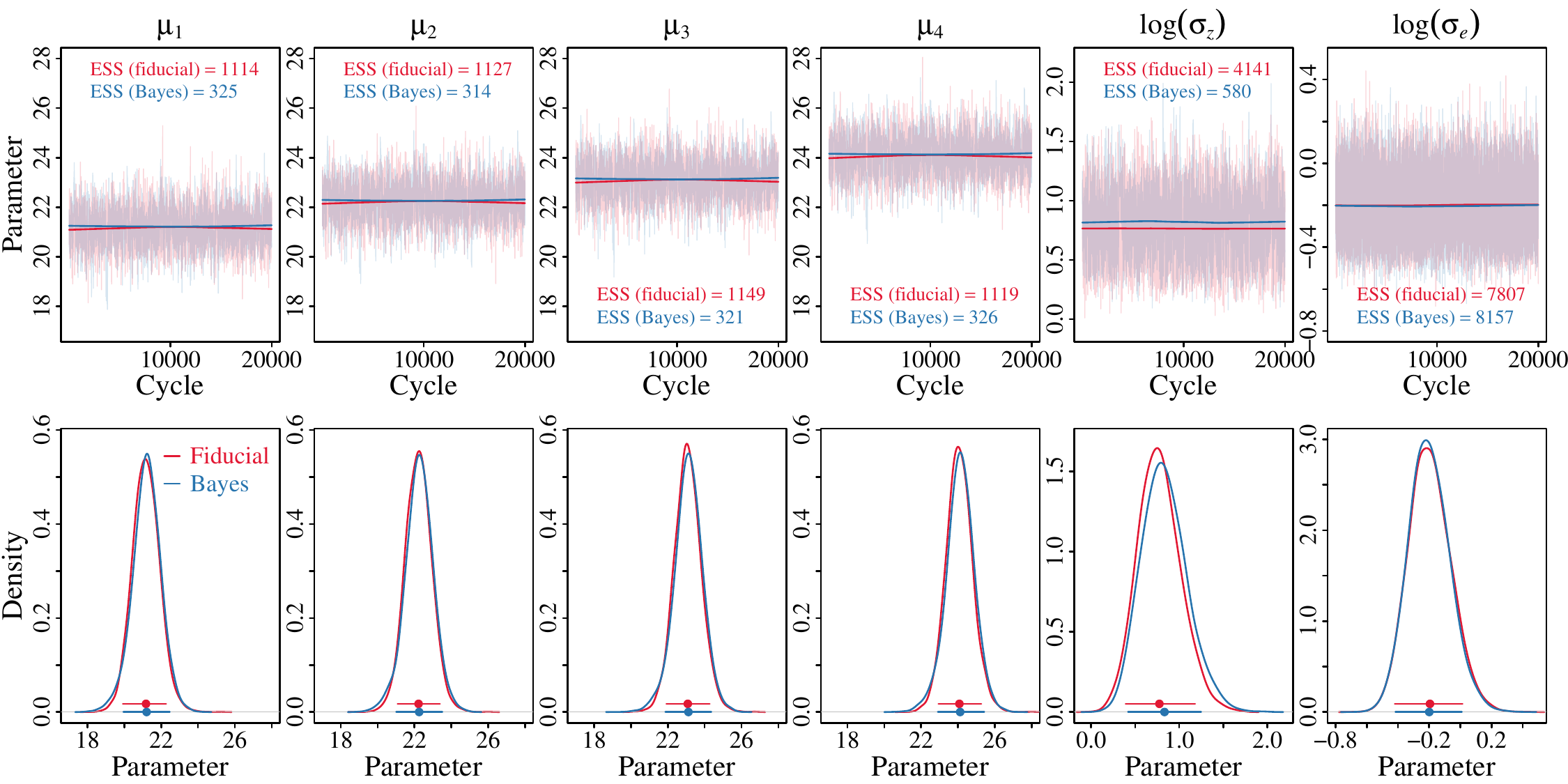}
  \caption{Trace plots for Markov chain Monte Carlo samplers (top panels) and estimated (parameter-by-parameter marginal) densities of the Bayesian posterior and the generalized fiducial distribution (bottom panels) in the orthodontic growth example. Each column represents a parameter in the model. Fiducial and Bayesian results are shown in red and blue colors, respectively. Top: The trace plots are produced from 20000 cycles after burning in the first 10000 to remove the impact of arbitrary starting values. To better visualize trends of sample paths, locally-weighted polynomial regression estimates are superimposed as solid curves. The effective sample size (ESS) statistic is presented for each marginal sample path. Bottom: Estimated fiducial/posterior means (dots) and 90\% highest-density interval estimates (lines) are added to the bottom of density plots.}
  \label{fig:param}
\end{figure*}

\subsection{Empirical Data: Orthodontic Growth}
\label{ss:emp}
Using the orthodontic growth data \citep{PotthoffRoy1964}, we apply the manifold RWM algorithm to sample from the fiducial limiting density (\ref{eq:limitfid1}). The data set contains measures of the distance between the pituitary and pterygomaxillary fissures for a total number of 27 children, including 16 males and 11 females. Measures were obtained every two years from age 8 to 14, resulting in four measures per child. Only the female subsample ($I = 4$ and $J = 11$) was considered in the our illustration. The distributions of random components are set to $Z\sim{\cal N}(0_J, \id_J)$ and $\vec(E)\sim {\cal N}(0_{IJ}, \id_{IJ})$. For comparison purposes, we also conduct Bayesian inference with a weakly informative prior per the recommendation of \citet{Gelman2006}. Specifically, improper uniform priors were specified for $\mu_1, \dots, \mu_4$ and $\log\sigma_e$; a half-Cauchy prior with scale 34.5 was used for $\sigma_z$, in which the Cauchy scale was set to three times the range of the data \citep[see, e.g.,][]{CisewskiHannig2012}.

To sample from (\ref{eq:limitfid1}) using Algorithm \ref{alg:rwm}, the proposal scale of the Gaussian random walk was set to $\delta = 1.05$. After discarding the first 10000 cycles to remove the influence of an arbitrary starting state, we obtain an empirical acceptance rates of 0.4882 out of 20000 retained MCMC cycles. The tolerance and the maximum iterations of the Newton solver were set to $10^{-6}$ and 50, respectively. Using the same numbers of burn-in and retained cycles, a slice-within-Gibbs sampler was employed to simulate from the augmented posterior distribution of $\theta$ and $Z$. Both sampling algorithms were implemented in \citet{matlab}, and the source code is available upon request.

Trace plots of the generated Markov chains were displayed in the first row of Figure \ref{fig:param}; note that we plot the logarithms of the scale parameters $\sigma_z$ and $\sigma_e$. All the twelve reported univariate sample paths appear to be stationary. The effective sample size \citep[ESS;][Chapter 11]{GelmanEtAl2013} statistics (upon rounding to integers) range from 1114 to 7807 for fiducial samples generated by the manifold sampler, and from 314 to 8157 for posterior samples generated by the slice-within-Gibbs sampler. For both MCMC samplers, lower ESS statistics were observed for the treatment mean parameters. Except for $\log\sigma_e$, i.e., the log-scale parameter for the interaction effect, the manifold RWM sampler yields higher ESS than the slice-within-Gibbs sampler, indicating better sampling efficiency.

We compare marginal GFDs and posteriors for each of the six parameters in the second row of Figure \ref{fig:param}. For treatment mean parameters $\mu_1, \dots, \mu_4$ and the interaction log-scale parameter $\log\sigma_e$, the two sets of distributions are almost identical. In the meantime, the posterior for the subject log-scale parameter $\log\sigma_z$ appears to concentrate on slightly higher values compared to the corresponding GFDs. The same pattern can be identified by contrasting the 90\% highest density interval estimators.
 
\section{Concluding Remarks}
\label{s:end}
In the present paper, we approach Bayesian inference and GFI from a differential geometric perspective. Conditional on the observed data, a statistical model with a smooth DGE (meeting Assumption \ref{as:reg}) defines a submanifold within the joint space of random components  $u$ and parameters $\theta$---namely, the data generating manifold. A Bayesian posterior or a GFD corresponds to the $\theta$-marginal for a joint distribution of $u$ and $\theta$ that is supported on the data generating manifold and has an absolutely continuous density with respect to the intrinsic measure of the manifold. Moreover, the data generating manifold can be equivalently represented by its projection on the space of random components $u$. We also demonstrate that manifold MCMC samplers can be utilized to construct Monte Carlo approximations to the GFD in an empirical example.

Taking an alternative, yet still differential geometric, perspective on GFI, \citet{MurphEtAl2022} defined a GFD whose parameter space $\Theta$ itself is a manifold. In contrast to this paper, where the GFD is defined as the limiting measure of an ambient distribution constrained to a sequence of shrinking sublevel sets (an \textit{extrinsic} perspective), \citet{MurphEtAl2022} define their distribution directly on the manifold (an \textit{intrinsic} perspective) using the smooth local structure. Whenever the dimension of the random component and the observed data are the same ($m = n$), Theorem 3.1 from \citet{Hwang1980} can be used to calculate an extrinsic analogue of the GFD from \citet{MurphEtAl2022}. Under some regularity conditions, \citet{MurphEtAl2022} showed that these extrinsic and intrinsic perspectives converge in the local limit. A natural extension of the main result of this paper is to extend Proposition \ref{prop:gfi} to additionally handle a constrained parameter space, which can be seen as both a generalization of the result from \citet{MurphEtAl2022} (for $m > n$), and as an alternative, extrinsic perspective using limiting measures.

\begin{supplement}
\stitle{Supplementary Document for ``A Geometric Perspective on Bayesian and Generalized Fiducial Inference''}
\sdescription{The supplementary document contains proofs of the theoretical results and additional computational details for the repeated ANOVA example (Section \ref{s:ex}).}
\end{supplement}

\setcounter{secnumdepth}{0}
\bibliographystyle{imsart-nameyear}
\bibliography{FiducialGeometry}

\clearpage
\includepdf[pages=-]{FiducialGeometry-1stRev-supp.pdf}

\end{document}